\newcommand{\tikzAngleOfLine}{\tikz@AngleOfLine}
\def\tikz@AngleOfLine(#1)(#2)#3{%
\pgfmathanglebetweenpoints{%
\pgfpointanchor{#1}{center}}{%
\pgfpointanchor{#2}{center}}
\pgfmathsetmacro{#3}{\pgfmathresult}%
}
\newcommand{\bN}{\mathbb{N}}
\newcommand{\cB}{\mathcal{B}}
\newcommand{\wt}{\widetilde}
\newcommand{\ba}{\bar{\alpha}}
\newcommand{\La}{\Lambda}
\newcommand{\vf}{\varphi}
\newcommand{\ve}{\varepsilon}
\newcommand{\cT}{\mathcal{T}}
\newcommand{\cR}{\mathcal{R}}
\newcommand{\cP}{\mathcal{P}}
\newcommand{\bD}{\mathbb{D}}
\newcommand{\bE}{\mathbb{E}}
\newcommand{\rad}{\operatorname{rad}}
\newcommand{\soc}{\operatorname{soc}}
\renewcommand{\mod}{\operatorname{mod}}
\newcommand{\Img}{\operatorname{Im}}
\def\vec#1{\left[\begin{smallmatrix}#1\end{smallmatrix}\right]}
\begin{document}

\newtheorem{defi}{Definition}[section]
\newtheorem{rem}[defi]{Remark}
\newtheorem{prop}[defi]{Proposition}
\newtheorem{ques}[defi]{Question}
\newtheorem{lemma}[defi]{Lemma}
\newtheorem{cor}[defi]{Corollary}
\newtheorem{thm}[defi]{Theorem}
\newtheorem{expl}[defi]{Example} 
\newtheorem*{mainthm}{Main Theorem}

\parindent0pt

\title[]{Local structure of tame symmetric algebras of period four$^*$ \footnote{*This article was partially 
financed from the research grant no. 2023/51/D/ST1/01214 of the Polish National Science Center}} 

\author[K. Erdmann]{Karin Erdmann}
\address[Karin Erdmann]{Mathematical Institute, University of Oxford, UK}
\email{erdmann@maths.ox.ac.uk}

\author[A. Hajduk]{Adam Hajduk}
\address[Adam Hajduk]{Faculty of Mathematics and Computer Science, 
Nicolaus Copernicus University, Chopina 12/18, 87-100 Torun, Poland}
\email{ahajduk@mat.umk.pl} 

\author[A. Skowyrski]{Adam Skowyrski}
\address[Adam Skowyrski]{Faculty of Mathematics and Computer Science, 
Nicolaus Copernicus University, Chopina 12/18, 87-100 Torun, Poland}
\email{skowyr@mat.umk.pl} 

\subjclass[2020]{Primary: 16D50, 16E30, 16G20, 16G60}
\keywords{Symmetric algebra, tame algebra, periodic algebra, quiver}

\begin{abstract} In this paper we study the structure of Gabriel quivers of tame symmetric algebras of 
period four. More precisely, we focus on algebras having Gabriel quiver {\it biregular}, i.e. the numbers 
of arrows starting and ending at any vertex are equal, and do not exceed $2$. We describe the local structure 
of biregular Gabriel quivers of tame symmetric algebras of period four, including certain idempotent 
algebras. The main result of this paper shows that, in fact, these Gabriel quivers have local structure 
exactly as Gabriel quivers of so called {\it weighted surface algebras} (see \cite{WSA,WSA-GV}), which 
partially extends characterization presented in \cite{AGQT}. \end{abstract}

\maketitle

\section{Introduction}\label{sec:1} 

Throughout this paper by an algebra we mean an indecomposable basic finite-dimensional $K$-algebra over an 
algebraically closed field $K$. For an algebra $\La$, we denote by $\mod\La$ the category of all finitely 
generated (right) $\La$-modules. \smallskip 

From the remarkable Tame and Wild Theorem \cite{TW} (see also \cite{CB}) every algebra lies in one of 
two disjoint classes. The first class consists of so called {\it tame algebras} for which the indecomposable 
modules occur in each dimension $d$ in a finite number of discrete and a finite number of one-parameter 
families. The second class contains the {\it wild algebras} whose representation theory comprises the 
representation theories of all algebras. \medskip 

A prominent role in the representation theory is played by the {\it selfinjective} algebras, which are algebras 
$\La$ such that $\La$ is injective in $\mod\La$, or equivalently, every projective module in $\mod\La$ 
is injective. Among the selfinjective algebras we distinguish two important classes: symmetric algebras and 
periodic algebras. \smallskip 

Recall that an algebra $\La$ is said to be symmetric if there exists an associative non-degenerate symmetric 
$K$-bilinear form on $\La$, and any such an algebra is selfinjective, due to well known characterizations; see 
\cite{SkY}. There are many classical examples of symmetric algebras, for instance, blocks of finite-dimensional 
group algebras \cite{E1} or Hecke algebras associated to Coxeter groups. Any algebra $\La$ is a quotient of its 
trivial extension $T(\La)$, which is a symmetric algebra.\smallskip 

For a module $M$ in $\mod\La$, we denote by $\Omega_\La(M)$ its syzygy, that is the kernel of a minimal projective 
cover of $M$ in $\mod\La$. A module $M$ in $\mod\La$ is called {\it periodic} if $\Omega_\La^n(M)\simeq M$, for 
some $n\geqslant 1$, and the minimal such a number is called the {\it period} of $M$. An important class of 
selfinjective algebras consists of the {\it periodic} algebras $\La$, for which $\La$ is a periodic module in 
$\mod\La^e$, where $\La^e = \La\otimes_K \La$ is the enveloping algebra of $\La$ (this is equivalent to say that 
$\La$ is periodic as an $\La$-$\La$-bimodule). Every periodic algebra $\La$ has periodic module category, that is, 
all (nonprojective) modules in $\mod\La$ are periodic (with period dividing the period of $\La$ 
\cite[see Theorem IV.11.19]{SkY}). Finding and possibly classifying periodic algebras is an important problem, as they 
appear in many places, revealing connections with group theory, topology, singularity theory, cluster algebras and 
algebraic combinatorics (see the survey \cite{periodic1}). \smallskip 

We only mention that any periodic algebra $\La$ is selfinjective, due to the results of \cite{GSnS}. Indeed, if $\La$ 
is $d$-periodic, then any simple $\La$-module is $\Omega_\La$-periodic with period dividing $d$. This is equivalent 
to say that $\La$ is so-called $d${\it -twisted periodic} algebra, which means that $\Omega_{\La^e}^d(\Lambda) \cong \ _1\La_\sigma$, 
in $\mod\La^e$, for some automorphism $\sigma$ of $\La$. Both these conditions imply that $\La$ is selfinjective, by 
\cite[Theorem 1.4]{GSnS}. The inverse question, whether any $d$-twisted periodic algebra is also periodic is a 
longstanding  conjecture (see \cite{periodic2}). \medskip 

We are concerned with the classification of all tame symmetric periodic algebras, with a special focus on the 
case of $4$-periodic algebras, which is motivated by a conjecture that all tame symmetric periodic algebras of 
non-polynomial growth are of period $4$ \cite[Problem]{WSA}. Any tame symmetric periodic algebra will be called TSP4, 
for short. Such algebras appeared naturally in the study of blocks of group algebras with generalized quaternion 
defect groups. Using known properties of these blocks, Erdmann introduced and investigated in \cite{q1,q2,q3,E1} 
the algebras of quaternion type, as  the (indecomposable) representation-infinite tame symmetric algebras with 
non-singular Cartan matrix for which every indecomposable non-projective module is periodic of period dividing $4$. 
The most recent extension concerns so-called {\it generalized quaternion algebras} \cite{AGQT}, or simply GQT algebras, 
which are by definition, the (indecomposable) tame symmetric algebras of infinite representation type with all simple 
modules of period (exactly) $4$. Note that any representation-infinite algebra of quaternion type, or more generally, 
TSP4 algebra, is a GQT algebra (see \cite{E-note}). \medskip 

In \cite{Du} Dugas proved that every representation-finite self-injective algebra, without simple blocks, is a periodic algebra. 
We note that the symmetric periodic algebras of finite representation type and representation-infinite of polynomial growth 
are classified, and each class admits a similar description as a socle deformation of an algebra of invariants of the trivial 
extension algebra of a tilted algebra (of Dynkin or tubular type; see \cite{Sk,BES}). \medskip 

Recently, motivated by cluster theory, Erdmann and Skowro\'nski defined large class of symmetric periodic algebras of 
period $4$ associated to triangulations of real compact surfaces, called \emph{weighted surface algebras} (see 
\cite{WSA,WSA-GV,WSA-corr}). These are algebras of the form 
$\Lambda=\Lambda(S,\overrightarrow{\mathcal{T}},m_\bullet,c_\bullet,b_\bullet)$ 
associated to a (directed) triangulated surface $(S,\overrightarrow{\mathcal{T}})$ and some additional data encoded 
in functions $m_\bullet$, $c_\bullet$, and $b_\bullet$. More specifically, starting from $(S,\overrightarrow{\mathcal{T}})$ 
they first construct a so-called triangulation quiver $(Q,f)$ uniquely determined by $(S,\overrightarrow{\mathcal{T}})$, 
where $Q$ is a $2$-regular quiver and $f$ is a certain permutation of arrows in $Q$ (of order $3$), and then define $\Lambda$ 
as a quotient $\Lambda=KQ/I$ of the path algebra $KQ$ of $Q$ by an ideal $I$ depending on $(Q,f,m_\bullet,c_\bullet,b_\bullet)$. 
Recall that a vertex in $Q$ is $p$-regular if there are exactly $p$ arrows starting  and $p$ arrows ending at this 
vertex, and $Q$ is called $p$-regular, provided that all of its vertices are $p$-regular. Note also that the quiver 
$Q$ is a glueing of a finite number of the following three types of {\it blocks} 
$$\xymatrix@R=0.2cm{\\ I: } \qquad \xymatrix@C=0.3cm@R=0.2cm{\\ \ar@(lu, ld)[]\circ} \qquad \qquad 
\xymatrix@R=0.2cm{\\ II: } \qquad \xymatrix@C=0.6cm@R=0.2cm{&\\ \ar@(lu, ld)[]\bullet\ar@<.35ex>[r]&\ar@<.35ex>[l]\circ} \qquad \qquad  
\xymatrix@R=0.2cm{\\ III: \\} \ \xymatrix@C=0.3cm@R=0.2cm{&\circ\ar[ld]&\\ \circ\ar[rr]&&\circ\ar[lu]}$$ 
corresponding to boundary edges, self-folded triangles, and triangles of the triangulation 
$\overrightarrow{\cT}$, respectively (for a definition of a block, see the end of Section 
\ref{sec:2}). \smallskip 

By the results of \cite{WSA,WSA-GV}, the weighted triangulation algebras are TSP4 algebras, with some minor exceptions. 
Actually, they exhaust all TSP4 algebras in some cases, as due to the results of Erdmann and Skowro\'nski from \cite{AGQT}, 
an algebra $\La$ with $2$-regular Gabriel quiver having at least three vertices is a TSP4 algebra (if and only if $\La$ 
is a GQT algebra) if and only if $\La$ is a weighted surface algebra 
$\Lambda(S,\overrightarrow{\mathcal{T}},m_\bullet,c_\bullet,b_\bullet)$ different from a singular tetrahedral algebra, 
or else it is the higher tetrahedral algebra $\Lambda(m,\lambda)$, $m\geqslant 2$, $\lambda\in K^*$. For details we refer to 
\cite{AGQT}; see also \cite{HTA}. Note that the Gabriel quiver $Q_\La$ of a weighted surface algebra $\La=KQ/I$ is not 
necesarily $2$-regular, and this is the case if and only if $Q_\Lambda=Q$. In general, $Q_\Lambda$ may be smaller than $Q$, and the 
arrows in $Q$ which are not visible in $Q_\La$, are called virtual. Nonetheless, the Gabriel quiver of any weighted surface 
algebra is {\it biregular}, that is, any vertex of $Q$ is either $1$-regular or $2$-regular. Indeed, any virtual arrow in 
$Q$ is either a loop in a block of type II, or it forms a $2$-cycle coming from glueing of two blocks of type III. As a 
result, if $\alpha$ is a virtual arrow in $Q$, then $Q$ admits one of the following two subquivers 
$$\xymatrix@R=0.4cm{&\\ \ar@(lu, ld)[]_{\alpha}\bullet\ar@<.35ex>[r]&\ar@<.35ex>[l]\circ}\qquad 
\xymatrix@R=0.4cm{\\ \mbox{or}\\} \qquad 
\xymatrix@R=0.4cm{&\bullet \ar[rd]\ar@<-0.3ex>[dd]& \\ \circ \ar[ru] &  & \circ \ar[ld]\\ & \bullet\ar[lu]\ar@<-0.3ex>[uu] & }$$ 
where in the second case $\alpha$ is one of the two arrows forming a $2$-cycle. Consequently, then the Gabriel quiver 
$Q_\La$ contains one of the following blocks 
$$\xymatrix@R=0.4cm{\\ V1: \\ }\xymatrix@R=0.4cm{&\\ \bullet\ar@<.35ex>[r]&\ar@<.35ex>[l]\circ}\qquad 
\xymatrix@R=0.4cm{\\ \mbox{or}\\} \qquad 
\xymatrix@R=0.4cm{\\ V2: \\ }\xymatrix@R=0.4cm{&\bullet \ar[rd]& \\ \circ \ar[ru] &  & \circ \ar[ld]\\ & \bullet\ar[lu] & }$$ 
where the black vertices are $1$-regular in $Q_\La$. Using the previous blocks I-III together with V1 and V2, we can 
construct all Gabriel quivers of weighted surface algebras, including the quiver of a spherical algebra \cite[see 
Section 5]{HSS}, which is obtained by a glueing of two blocks V2. There is a nice analogy between the biregular case 
and $2$-regular case, since we also have an exceptional algebra, called higher spherical algebra \cite{HSA}, which 
is a TSP4 algebra given by the same Gabriel quiver as the spherical algebra (biregular), but it is not isomorphic 
to a weighted surface algebra. This suggest that the first natural step after \cite{AGQT} is to classify all TSP4 
(or GQT) algebras having biregular Gabriel quivers, and prove that these are either weighted surface algebras (except 
singular spherical algebras) or higher spherical algebras. \medskip 

In this paper, we focus  on describing the local structure of Gabriel quivers of TSP4 algebras, which 
is already a challenge. Our main result shows that indeed, if $\Lambda$ is a GQT algebra with biregular Gabriel 
quiver, then all $1$-regular vertices belong to blocks of types V1 or V2. Namely, we have the following theorem, 
which states the main result of this article. \smallskip 

\begin{mainthm} Let $\La$ be a GQT algebra with biregular Gabriel quiver $Q=Q_\La$. Then every $1$-regular vertex 
$i$ in $Q$ is contained in a block of the form 
$$\xymatrix@R=.7ex{&&& &\bullet_i\ar[rd]& \\ 
\circ\ar@<+.4ex>[r]&\bullet_i\ar@<+.4ex>[l]&\mbox{ or }& \circ\ar[ru] && \circ\ar[ld]  \\ 
&&& &\ar[lu]\bullet&}$$ 
\end{mainthm} 

The paper is organized as follows. In the next section we recall basic notions needed in this 
article. Section \ref{sec:3} summarizes  the most important results from 
\cite{EHS} on tame symmetric algebras 
of period four used further. In Section \ref{sec:4} we discuss some preparatory results concerning $1$-regular 
vertices. Section \ref{sec:4+} presents one crucial proposition describing the local structure of 
Gabriel quiver around $1$-vertices lying in triangles, which is the most demanding result of this paper. 
In the last section we give a proof of the Main Theorem. \medskip 

For the neccessary background in the representation theory of algebras we refer the reader to books \cite{ASS,SkY}. \bigskip  

\section{Basic notions}\label{sec:2} 

By a quiver we mean a quadruple $Q=(Q_0,Q_1,s,t)$, where $Q_0$ is a finite set of vertices, $Q_1$ a finite set of 
arrows and $s,t:Q_1\to Q_0$ are functions assigning to every arrow $\alpha$ its source $s(\alpha)$ and its target 
$t(\alpha)$. For a quiver $Q$, we denote by $KQ$ the {\it path algebra} of $Q$, whose $K$-basis is given by all 
paths of length $\geqslant 0$ in $Q$. Recall that the Jacobson radical of $KQ$ is the ideal $R_Q$ of $Q$ generated 
by all paths of length $\geqslant 1$, and ideal $I$ of $KQ$ is called {\it admissible}, provided that 
$R_Q^m\subseteq I \subseteq R_Q^2$, for some $m\geqslant 2$. Note that the trivial paths $\ve_i$ (of length $0$) 
at vertices $i\in Q_0$ form a complete set of pairwise orthogonal primitive idempotents of the path algebra $KQ$, 
and their sum is the  unit of $KQ$. \smallskip 

If $Q$ is a quiver and $I$ is an admissible ideal $I$ of $KQ$, then $(Q,I)$ is said to be a {\it bound quiver}, and 
the associated algebra $KQ/I$ is called a {\it bound quiver algebra}. It is well-known that any algebra over an 
algebraically closed field is Morita equivalent to a bound quiver algebra, and by a presentation of an algebra 
$\Lambda$ we mean a particular isomorphism $\Lambda\cong KQ/I$, for some bound quiver $(Q,I)$. In this case, the 
cosets $e_i=\varepsilon_i+I\in \La$ form a complete set of primitive idempotents of $\La$ and $\sum_{i\in Q_0}e_i$ 
is the unit of $\Lambda$. \smallskip 

A {\it relation} in the path algebra $KQ$ is any $K$-linear combination of the form 
$$\sum_{i=1}^r \lambda_i w_i,\leqno{(1)}$$ 
where all $\lambda_i\in K$ are non-zero and the $w_i$ are pairwise different paths of of length $\geqslant 2$ with common 
source and target. It is known that an ideal $I$ of $KQ$ is admissible if and only if $I$ is generated by a finite number 
of relations $\rho_1,\dots,\rho_m$. Moreover, we may choose such relations $\rho_1,\dots,\rho_m$ to be minimal (i.e. each 
$\rho_i$ is not a combination of relations from $I$). For a bound quiver algebra $A=KQ/I$, given the set of (minimal) 
relations $\rho_1,\dots,\rho_m$ generating $I$, we have the (minimal) equalities $\rho_1=0,\dots,\rho_m=0$, called 
{\it minimal relations}. \smallskip 

Let $\rho$ be a relation of the form $(1)$ and $w$ a path in $Q$. We write $w\prec \rho$, if $w$ is one of the summands 
of $\rho$, i.e. $w=w_i$, for some $i\in\{1,\dots,r\}$. Moreover, if $w$ is a path in $Q$, we will use notation $w\prec I$, 
if $w\prec \rho_i$, for some $i\in\{1,\dots,m\}$, where $\rho_1,\dots,\rho_m$ are fixed minimal relations generating $I$. 
\medskip 

If $\Lambda$ is an algebra with a given presentation $\Lambda=KQ/I$, the modules $P_i=e_i\La$, for $i\in Q_0$, form a 
complete set of all pairwise non-isomorphic indecomposable projective modules in $\mod\La$, and modules the $I_i=D(\La e_i)$, 
for $i\in Q_0$, form a complete set of all pairwise non-isomorphic indecomposable injective modules in $\mod\La$. We 
denote by $S_i$, $i\in Q_0$, the associated simple module $S_i=P_i/\rad P_i\cong \soc I_i$. 

In this paper we work with algebras $\La$ which are symmetric, that is, there is a non-degenerate (associative) 
symmetric $K$-bilinear form $\La\times\La\to K$. It follows that $P_i\simeq I_i$, for any $i\in Q_0$, hence in 
particular, $\Lambda$ is then a self-injective algebra. We also assume $Q$ is connected, i.e. $\La$ is indecomposable 
as an algebra. For a vertex $i\in Q$, we denote by $p_i$ the dimension vector of $P_i$, and by $s_i$ the dimension 
vector of $S_i$. Let also $J$ denote the Jacobson radical $J=J_\La=R_Q+I$ of $\La$. \smallskip 

For $i\in Q_0$, we let $i^-$ be the set of arrows ending at $i$, and $i^+$ the set of arrows starting at $i$. In this 
paper, the sizes $|i^-|$ and $i^+|$ are at most $2$. A quiver $Q$ is said to be $2${\it -regular} if $|i^-|=|i^+|=2$, and 
{\it biserial} if $1\leq |i^-|, |i^+|\leq 2$. We say that $i\in Q_0$ is a {\it regular} vertex ($1$- or $2$-regular), 
provided $|i^-|=|i^+|$ (and the size is equal $1$ or $2$, respectively). Otherwise, we call $i$ a {\it non-regular} 
vertex. In other words, $Q$ is biserial and regular if and only if $Q_0$ consists only of $1$- or $2$-regular vertices. 
In this case, we will call $Q$ a {\it biregular} quiver, for short. \medskip 

All algebras condsidered in this article are assumed to be tame. Let us point out a few consequences of this assumption. 
First, for any two vertices $i,j\in Q_0$, there are at most two arrows $i\to j$ in $Q_1$, since otherwise, $Q$ 
has a Kronecker subquiver $\Delta$ with three arrows, and the path algebra $K\Delta$ is a wild hereditary quotient 
algebra of $\Lambda$, and  $\Lambda$ is a wild algebra. \smallskip 

Further, it is clear that $Q$ cannot have a subquiver $\Delta$ of the form 
$$K_2^+= \ \xymatrix@C=0.5cm{\circ &\ar@<+0.4ex>[l]\ar@<-0.4ex>[l]\circ\ar[r]&\circ} \qquad \mbox{or} \qquad 
K_2^-= \ \xymatrix@C=0.5cm{\circ \ar@<+0.4ex>[r]\ar@<-0.4ex>[r]&\circ&\ar[l]\circ} $$ 
because $K\Delta$ would be again a wild (hereditary) quotient algebra of $\Lambda$. It follows that 
for any two vertices $i,j$ connected by double arrows $\alpha,\ba$, there are no loops at $i,j$. Indeed, 
if there is a loop at $i$ or $j$ then there exists a quotient algebra $C$ of $B=\La/J^3$ and a Galois covering 
$\wt{C}\to C$ such that $\wt{C}$ contains a full convex subcategory of one of the forms 
$$\xymatrix@C=0.5cm{\circ &\ar@<+0.4ex>[l]^{\alpha}\ar@<-0.4ex>[l]_{\ba}\circ\ar[r]&\circ} \qquad \mbox{or} \qquad 
\xymatrix@C=0.5cm{\circ \ar@<+0.4ex>[r]^{\ba}\ar@<-0.4ex>[r]_{\alpha}&\circ&\ar[l]\circ} $$ 
which is isomorphic to a wild hereditary algebra $K\Delta$, where $\Delta$ is of type $K_2^\pm$. Then using 
\cite[Theorem]{DS2} and \cite[Proposition 2]{DS1} we conclude that $\wt{C}$ and $C$ are wild. Since $C$ is a quotient 
of $B$ and $\Lambda$, we obtain that $\La$ is wild. \smallskip 

Using analogous argumentation, one can prove that for any vertex $i\in Q_0$, there is at most one loop at $i$, 
if $Q$ has more than one vertex. In fact, if $Q$ admits two loops at vertex $i$, then appropriate Galois covering 
contains an infinite double-line 
$$\xymatrix@C=0.5cm{\dots \ar@<+0.4ex>[r]\ar@<-0.4ex>[r] & \circ 
\ar@<+0.4ex>[r]\ar@<-0.4ex>[r]&\circ\ar@<+0.4ex>[r]\ar@<-0.4ex>[r] & \dots}$$ 
being a 'resolution' of the two loops in $Q$. It follows that $Q$ has exactly one vertex, because otherwise, the 
above line can be extended to a wild subcategory of type $K_2^\pm$. \smallskip 

We say that $\Delta$ is a subquiver {\it of type} $K_2^*$ if $\Delta$ is on of the following two quivers 
$$ \xymatrix@C=0.5cm{\circ \ar@<+0.4ex>[r]^{\ba}\ar@<-0.4ex>[r]_{\alpha}&\circ\ar[r]&\circ} \ \mbox{ or } \ 
\xymatrix@C=0.5cm{\circ \ar[r]&\circ \ar@<+0.4ex>[r]^{\ba}\ar@<-0.4ex>[r]_{\alpha}&\circ}$$
and both $\alpha\beta, \bar{\alpha}\beta\nprec I$ (or both $\beta\alpha,\beta\bar{\alpha}\nprec I$). Note that 
if $\La$ is tame, its Gabriel quiver $Q=Q_\La$ does not contain a subquiver of type $K_2^*$. To see this, let 
$\Delta$ be a subquiver in $Q$ of type $K_2^*$. Then  consider the factor algebra $C=KQ/J_C$ of $B=\La/J^3$, 
where $J_C$ is generated by all paths of length $2$ in $Q$ except $\alpha\beta$ and $\bar{\alpha}\beta$, in 
the first case, and $\beta\alpha,\beta\ba$, in the second. As before, $C$ admits a Galois covering $\wt{C}\to C$ 
containing a full convex subcategory 
$$\xymatrix@C=0.5cm{\circ \ar@<+0.4ex>[r]^{\ba}\ar@<-0.4ex>[r]_{\alpha}&\circ\ar[r]&\circ} \ \mbox{ or } \ 
\xymatrix@C=0.5cm{\circ \ar[r]&\circ \ar@<+0.4ex>[r]^{\ba}\ar@<-0.4ex>[r]_{\alpha}&\circ}$$ 
which is isomorphic to a wild hereditary algebra $K\Delta$. \smallskip 

The above subquivers can be viewed as the smallest wild 'pieces' that may occur in $Q$. In this paper we will 
frequently use also other wild subquivers, or more precisely, induced bound quivers (or algebras), which are 
reffered to as the {\it wild one-relation algebras}. They were introduced by Ringel in \cite{Rin}, where the 
list of all 37 wild-one relation algebras was also presented \cite[see 1.5. Theorem 2]{Rin}. Actually, the list 
contains only underlying graphs (with unique relation marked by circle and square), and whenewer we use the term 
wild one-relation algebra, we mean an algebra such that its Gabriel quiver has underlying graph being one of 
the shapes listed in \cite{Rin}. We omit marking the unique relation, because it is always clear from the context 
what is its position. \smallskip 

We will use abbreviation RiN to indicate the wild one-relation algebra, whose number on the 
Ringel's list is $N$. Note that the list in \cite{Rin} uses three sorts of numerations to denote the graphs, 
and by a number on this list, we mean its number in the Roman numeration (one of the three conventions). For 
hereditary algebras of Euclidean (or wild) type, we use classical notation $\wt{A}_n,\wt{D}_n,\wt{E}_6,\wt{E}_7,\wt{E}_8$ 
(or $\wt{\wt{D}}_n,\wt{\wt{E}}_6,\wt{\wt{E}}_7,\wt{\wt{E}}_8$). The biggest usage of this notation will be visible 
in two crucial results of this paper, that is, the Main Theorem and Proposition \ref{prop:4.2}. \smallskip 

Now, let us only mention one simplification which is equally frequently used during the sequel, where 
many proofs contain wordings like "the algebra admits the following wild-one relation algebra ... in covering". 
By this we mean a precise statement in the language of covering theory, namely, an algebra $A$ is said to 
be a subcategory in covering of $\La$, provided that the algebra $B=\La/J^d$ (for some $d\geqslant 3$) 
has a quotient algebra $C$, which admits a Galois covering $\tilde{C}\to\tilde{C}/G=C$, with finitely generated 
free group $G$, such that $\tilde{C}$ contains a full subcategory isomorphic to $A$. 

Anytime we use this abbreviation, appropriate arguments can be verified, but we do not elaborate on this, to 
avoid making longer enough long proofs. One simple instance of an argumentation of this type was shown above 
in the case of subquivers of type $K_2^*$, where the essential results needed from covering theory are cited. 
For more background on covering techniques we refer to cited papers \cite{DS1,DS2}. \medskip 

We will use the following notation and convention for arrows: we write $\alpha, \ba$ for the arrows starting at vertex 
$i$, with the convention that $\ba$ does not exist in case $|i^+|=1$. Similarly we write $\gamma, \gamma^*$ for the 
arrows ending at some vertex $i$, where again $\gamma^*$ may not exist. \smallskip 

Then $Q$ has a subquiver 
$$\xymatrix@R=0.3cm{x\ar[rd]^{\gamma}&&y\ar[ld]_{\gamma^*}\\&i\ar[ld]_{\alpha}\ar[rd]^{\ba}&\\j&&k}$$ 

We note that for a module $M$ in $\mod\La$, its {\it syzygy} $\Omega(M)$ is a kernel of any projective cover of $M$ 
in $\mod\La$. A module $M$ in $\mod\La$ is called {\it periodic} if $\Omega^d(M)\simeq M$, for some $n\geqslant 1$, 
and the smallest such a number is {\it period} of $M$. The notion of {\it inverse syzygy} $\Omega^{-1}(M)$ for a module 
$M$ in $\mod \La$ is defined similarly using injective envelopes. \smallskip 

Let $i\in Q_0$. Recall that there are canonical isomorphisms $\Omega(S_i)=\rad P_i=\alpha\La+\ba\La$ and 
$\Omega^{-1}(S_i)\cong (\gamma,\gamma^*)\Lambda\subset P_x\oplus P_y$ \cite[see Lemma 4.1]{AGQT}. In particular, then 
the module $P_i^+=P_j\oplus P_k$ is a projective cover of $\Omega(S_i)$ and the module $P_i^-=P_x\oplus P_y$ is an 
injective envelope of $\Omega^{-1}(S_i)$ ($\La$ is symmetric). As a result, if $S_i$ is a periodic module of period 
$4$, then $\Omega^2(S_i)\simeq\Omega^{-2}(S_i)$, and hence there is an exact sequence in $\mod\La$ of the form 
$$0\to S_i\to P_i \stackrel{d_3}\to  P_i^- \stackrel{d_2}\to  P_i^+ \stackrel{d_1}\to P_i \to S_i\to 0 \leqno{(*)}$$ 
with $\Img d_k\simeq\Omega^k(S_i)$, for $k\in\{1,2,3\}$. By our convention, $P_y$ or $P_k$ may not exist. Moreover, 
we denote by $p_i^+$ (respectively, $p_i^+$) the dimension vector of $P_i^+$ (respectively, of $P_i^-$). Using the 
above sequence, one easily gets that $p_i^+=p_i^-$. This dimension vector will be denoted by $\hat{p}_i$ (see also 
Lemma \ref{lem:3.1}). \smallskip 

We may assume that $d_1(x, y) : = \alpha x + \ba y$, since the induced epimorphism 
$(\alpha \ \ba):P_j\oplus P_k \to \Omega(S_i)=\alpha\La+\ba\La$ is a projective cover of $\Omega(S_i)$ in $\mod\La$. 
Adjusting arrows $\gamma$ or $\gamma^*$ (including impact on generators of $I$), we can fix $d_3(e_i) = (\gamma, \gamma^*)$, 
for some choice of the arrows $\gamma, \gamma^*$ ending at $i$ \cite[see Proposition 4.3]{AGQT}. \smallskip

The kernel of $d_1$ is then $\Omega^2(S_i)=\Img d_2$, and it has at most two minimal generators. They are images of 
idempotents $e_x\in P_x=e_x\La$ and $e_y\in P_y$ via $d_2:P_i^-\to P_i^+$. We denote them as $\vf$ and $\psi$, 
respectively, and then $\vf,\psi\in P_j\oplus P_k$, so we can write 
$$\vf = d_2(e_x,0) = (\vf_{jx}, \ \vf_{kx}) \ \ \mbox{and} \ \ \psi = d_2(0,e_y) = (\psi_{jy}, \ \psi_{ky}),$$
where $\vf_{ab}$ and $\psi_{ab}$ belong to $e_a\La e_b$ for all components of $\vf$ and $\psi$. \smallskip 

We recall that any homomorphism $d:P_x\oplus P_y\to P_j\oplus P_k$ in $\mod\Lambda$ can be represented in the matrix form 
$$M={m_{jx} \ m_{jy}\choose m_{kx} \ m_{ky}},$$ 
where $m_{ab}$ is a homomorphism $P_b\to P_a$ in $\mod\La$, identified with an element $m_{ab}\in e_a\La e_b$. In 
this way, $d$ becomes multiplication by $M$, i.e. $d(u)=M\cdot u$, for any $u\in P_i^-$ (here we use column notation 
for vectors in $P_i^-$ and $P_i^+$). \smallskip

Continuing with the generators of $\Omega^2(S_i)$, let $M_i$ be the matrix with columns being the components of $\vf$ and $\psi$, 
that is $d_2$ is given by matrix 
$$M_i={\vf_{jx} \ \psi_{jy} \choose \vf_{kx} \ \psi_{ky}}.$$ 
Rewriting compositions $d_1d_2=0$ and $d_2d_3=0$ in matrix form, we get identities 
$$(\alpha \ \ba)\cdot M_i = 0\mbox{ and }M_i\cdot {\gamma \choose \gamma^*} =0,$$
for some choice of arrows $\gamma, \gamma^*$ ending at $i$ (cf. \cite[Proposition 4.3]{AGQT}). \medskip 

We end this section with an auxiliary definition of a {\it block}, which appears in a natural way in the study of 
TSP4 algebras (see Introduction). Namely, by a {\it block}, we mean a subquiver $\Gamma=(\Gamma_0,\Gamma_1)$ of the 
Gabriel quiver $Q=Q_{\La}$, whose vertex set $\Gamma_0$ is a disjoint union $\Gamma_0=B\cup W$, such that every 
arrow $\alpha:i\to j$ with $i\in B$ or $j\in B$ belongs to $\Gamma_1$. Vertices in $B$ are usually marked by 
$\bullet$ (and called {\it black}), while vertices in $W$ are marked by $\circ$ (and called {\it white}, or 
{\it outlets}). \smallskip 

By definition, if $\Gamma$ is a block in $Q$ with $\Gamma_0=B\cup W$, then $Q$ consists of arrows in $\Gamma_1$, 
and all other arrows, which either connect two vertices in $Q_0\setminus\Gamma_0$, or a vertex not in $\Gamma_0$ 
with an outlet in $W$. In this way, $Q$ can be viewed as a glueing of $\Gamma$ with the rest part of the quiver 
via outlets in $W$. For a precise definition of a glueing, we refer to \cite[Section 3]{SS}. 

\bigskip 
 
\section{Tame symmetric periodic algebras}\label{sec:3} 

This section is devoted to present the main results on tame symmetric algebras of period four needed in further 
investigations. In what follows, we collect the most important results from \cite{EHS}. \smallskip 

We start with two observations concerning infinite type \cite[see Lemmas 2.1 and 2.4]{EHS}. For 
a vector $x\in\bN^n$, we denote by $|x|$ the sum $x_1+\dots+x_n$ of its coordinates. 

\begin{lemma}\label{lem:3.1} If $\La$ is of infinite type, then $|\hat{p}_i|>|p_i|$, for any vertex $i\in Q_0$. \end{lemma}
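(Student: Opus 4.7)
The plan is to extract a dimension identity from the six-term exact sequence $(*)$ and then rule out the boundary case $\dim_K\Omega^2(S_i)\leqslant 1$, by showing that it either contradicts the indecomposability of $P_i$ or collapses $\La$ to a Nakayama algebra on $\widetilde A_1$, which is representation-finite.

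From $(*)$ one reads $P_i^+/\Omega^2(S_i)\cong\Omega(S_i)=\rad P_i$, which gives
\[
|\hat p_i|=(|p_i|-1)+\dim_K\Omega^2(S_i),
\]
so the claim is equivalent to $\dim_K\Omega^2(S_i)\geqslant 2$. The case $\Omega^2(S_i)=0$ is immediate from periodicity: iterating $\Omega$ would give $\Omega^4(S_i)=0\ne S_i$. So it remains to exclude $\Omega^2(S_i)=S_\ell$ for some simple $S_\ell$. Since $\Omega^2(S_i)$ embeds in $P_i^+=\bigoplus_{\alpha\in i^+}P_{t(\alpha)}$ whose socle is $\bigoplus_{\alpha\in i^+}S_{t(\alpha)}$, the vertex $\ell$ must be a target of an arrow out of $i$; moreover $\ell\ne i$, as otherwise $\Omega^2(S_i)\cong S_i$ would make the period divide $2$.

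If $|i^+|=2$ with arrows $\alpha\colon i\to j$ and $\ba\colon i\to k$, a basis change in the arrow space from $i$ to $j$ (needed only when $j=k$) puts the embedding $S_\ell\hookrightarrow P_j\oplus P_k$ into a single summand, yielding $\rad P_i\cong (P_j/S_j)\oplus P_{k'}$ for some $k'\in\{j,k\}$ with $k'\ne i$ (the latter being forced by $\dim\rad P_i<\dim P_i$). Since $\La$ is selfinjective, $P_{k'}$ is injective and therefore splits off as a direct summand of $P_i$, contradicting its indecomposability.

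The remaining case, $|i^+|=1$ with unique out-arrow $\alpha\colon i\to j$, is where the real work lies: it forces $\ell=j$ and $\rad P_i\cong P_j/S_j$, so $|p_i|=|p_j|$. Applying the same dichotomy to $j$ (using $\Omega^2(S_j)\cong S_i$ from $4$-periodicity) rules out $|j^+|=2$, hence $|j^+|=1$ with unique out-arrow $\beta\colon j\to i$. To pin down the in-degrees, I would use the identity $|\hat p_i|=|p_i^-|=\sum_{\gamma\in i^-}|p_{s(\gamma)}|$: since $\beta\in i^-$ contributes $|p_j|=|\hat p_i|$, the remaining summands must vanish, forcing $|i^-|=1$, and symmetrically $|j^-|=1$. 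Hence $\{i,j\}$ has no arrows to or from the rest of $Q$, so by connectedness $Q_0=\{i,j\}$ and $Q=\widetilde A_1$, on which any admissible bound quiver algebra is Nakayama and hence representation-finite --- a contradiction. The principal obstacle is precisely this $|i^+|=1$ case, where the injective-summand trick of the $|i^+|=2$ case is not directly available, and one must instead propagate the simplicity of $\Omega^2$ to the neighbour $j$, control the in-degree via the analogous identity for $p_i^-$, and invoke connectedness to collapse $\La$ to a two-vertex Nakayama algebra.
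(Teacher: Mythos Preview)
Your argument is correct. The reduction to $\dim_K\Omega^2(S_i)\geqslant 2$ via the identity $|\hat p_i|=|p_i|-1+\dim_K\Omega^2(S_i)$ is clean, the injective-splitting argument in the $|i^+|=2$ case is valid, and the collapse to a two-vertex (or one-vertex) Nakayama algebra in the $|i^+|=1$ case is carried out correctly.

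One minor point: your justification for $\ell\ne i$ (``the period would divide $2$'') is only a contradiction under the GQT hypothesis that every simple has period \emph{exactly} $4$; in the general TSP4 setting (where the bimodule period is $4$ and hence simple modules have period \emph{dividing} $4$) this is not automatic. However, you do not actually need $\ell\ne i$ anywhere. In the $|i^+|=2$ case you derive $k'\ne i$ independently from $\dim\rad P_i<\dim P_i$, and in the $|i^+|=1$ case $\ell=i$ simply means $\alpha$ is a loop, which your $p_i^-$ argument still handles (yielding the one-vertex loop algebra $K[x]/(x^n)$, again Nakayama and representation-finite). So the assertion can simply be dropped.

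As for comparison: in the present paper this lemma is quoted from \cite[Lemma~2.1]{EHS} without proof, so there is no argument here to compare against. Your self-contained proof via the splitting of an injective summand and the Nakayama collapse is a natural and direct route.
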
 

\begin{lemma}\label{lem:3.2} If $\La$ is of infinite type, then there is no arrow $\alpha: i\to j$ with $i^+ = \{ \alpha\} = j^-$. 
\end{lemma}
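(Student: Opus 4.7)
The plan is to derive a direct contradiction from Lemma~\ref{lem:3.1} by applying it to the two vertices $i$ and $j$ simultaneously.

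First, I would unpack what the hypothesis $i^+=\{\alpha\}=j^-$ tells us about the modules $P_i^+$ and $P_j^-$. Recall from the discussion after the exact sequence $(*)$ that $P_i^+=\bigoplus_{\alpha'\in i^+}P_{t(\alpha')}$ is a projective cover of $\Omega(S_i)=\rad P_i$, and since $\Lambda$ is symmetric, $P_j^-=\bigoplus_{\gamma\in j^-}P_{s(\gamma)}$ is an injective envelope of $\Omega^{-1}(S_j)$. Under the assumption, these simplify drastically:
\[
P_i^+=P_j \qquad \text{and} \qquad P_j^-=P_i.
\]
Equivalently, for the dimension vectors introduced in Section~\ref{sec:2},
\[
\hat{p}_i=p_i^+=p_j \qquad \text{and} \qquad \hat{p}_j=p_j^-=p_i.
\]

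Now, since $\Lambda$ is a GQT algebra (in particular, every simple module has period $4$), the exact sequence $(*)$ is available at both vertices $i$ and $j$, and Lemma~\ref{lem:3.1} applies to each. Applying it at $i$ yields $|p_j|=|\hat{p}_i|>|p_i|$, while applying it at $j$ yields $|p_i|=|\hat{p}_j|>|p_j|$. These two strict inequalities contradict each other, and the lemma follows.

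The proof is essentially a two-line observation once Lemma~\ref{lem:3.1} is in hand, so there is no real obstacle; the only point requiring a moment of care is the correct identification $P_i^+=P_j$ and $P_j^-=P_i$, which uses both the hypothesis on $i^+,j^-$ and the symmetry of $\Lambda$ to ensure that the ``$-$'' side at $j$ is computed via incoming arrows and coincides with $P_i$.
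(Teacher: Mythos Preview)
Your argument is correct: the identifications $\hat{p}_i=p_i^+=p_j$ and $\hat{p}_j=p_j^-=p_i$ follow immediately from the hypothesis $i^+=\{\alpha\}=j^-$, and applying Lemma~\ref{lem:3.1} at both vertices yields the contradictory pair of strict inequalities $|p_j|>|p_i|$ and $|p_i|>|p_j|$.

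As for comparison, the present paper does not actually supply a proof of this lemma; it is quoted from \cite[Lemma~2.4]{EHS} alongside Lemma~\ref{lem:3.1}. Your derivation is the natural short argument one would expect, and since both lemmas are presented together in the cited source, it is very likely the same proof given there.
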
 

The following result gives a useful tool for constructing triangles in $Q$ induced from relations defining 
$\La=KQ/I$ \cite[see Proposition 4.1]{EHS}.

\begin{lemma}\label{lem:3.3} Assume $\alpha: i\to j$ and $\beta: j\to k$ are arrows such that $\alpha\beta \prec I$. Then there
is an arrow in $Q$ from $k$ to $i$, so that  $\alpha$ and $\beta$ are part of a triangle in $Q$. \end{lemma}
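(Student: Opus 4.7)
The plan is to translate the hypothesis $\alpha\beta\prec I$ into a statement about the second syzygy $\Omega^2(S_i)$ and then compare lifts in $KQ$ modulo $R_Q^2$. I will use $j'$ for the target of $\bar\alpha$ (when $|i^+|=2$) and keep the excerpt's notation $x,y$ for the sources of the arrows $\gamma,\gamma^*$ ending at $i$ (the letter $k$ is reserved for $t(\beta)$ as in the lemma).

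First I would pick a minimal relation $\rho\in I$ with $\alpha\beta\prec\rho$; all its summands are paths $i\to k$. Split by the initial arrow as $\rho=\alpha u+\bar\alpha v$ with $u\in e_j(KQ)e_k$ and $v\in e_{j'}(KQ)e_k$, where $\bar\alpha v$ is omitted when $|i^+|=1$; by construction $\beta\prec u$ in $KQ$. Passing to $\Lambda$, the identity $\alpha u+\bar\alpha v=0$ says $(u,v)\in\ker d_1=\Omega^2(S_i)=\Img d_2$, so the matrix form of $d_2$ provides $a\in e_x\Lambda e_k$ and $b\in e_y\Lambda e_k$ with
\[u=\varphi_{jx}a+\psi_{jy}b\quad\text{in }\Lambda.\]
Since $d_2$ is a projective cover of $\Omega^2(S_i)$, one has $\Omega^2(S_i)\subseteq\rad P_i^+$; in particular $\varphi_{jx},\psi_{jy}\in\rad\Lambda$, so any $KQ$-lifts of them lie in $R_Q$.

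Now suppose for contradiction that $k\notin\{x,y\}$. Then $a\in e_x\Lambda e_k$ with $x\neq k$ forces $a\in\rad\Lambda$, and likewise $b\in\rad\Lambda$, so lifts of $a$ and $b$ to $KQ$ also lie in $R_Q$. Therefore lifts of $\varphi_{jx}a$ and $\psi_{jy}b$ lie in $R_Q\cdot R_Q=R_Q^2$. The equality $u=\varphi_{jx}a+\psi_{jy}b$ in $\Lambda$ means the difference of lifts in $KQ$ belongs to $I\subseteq R_Q^2$, so $u\in R_Q^2$ in $KQ$. This contradicts $\beta\prec u$, because $\beta$ is an arrow and hence nonzero in $R_Q/R_Q^2$. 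Consequently $k\in\{x,y\}$, giving an arrow ($\gamma$ or $\gamma^*$) from $k$ to $i$, and $\alpha,\beta$ together with this arrow form the required triangle.

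The key pressure point is the radical containment $\Omega^2(S_i)\subseteq\rad P_i^+$, a standard consequence of $P_i^+$ being the projective cover of $\Omega(S_i)$; it is exactly what puts the components of the minimal generators $\varphi,\psi$ of $\Omega^2(S_i)$ in $R_Q$ up to $I$, which in turn drives the $R_Q^2$-contradiction. The only bookkeeping subtlety is handling the degenerate cases $|i^+|=1$ or $|i^-|=1$: the $\bar\alpha v$ or $\psi b$ summand simply vanishes and the same argument proceeds without change.
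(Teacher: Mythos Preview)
Your argument is correct. The paper does not actually prove this lemma here; it is quoted from \cite[Proposition~4.1]{EHS}, so there is no in-paper proof to compare against directly. That said, your approach is exactly the natural one given the exact-sequence machinery set up in Section~\ref{sec:2}: you read a minimal relation $\rho=\alpha u+\bar\alpha v$ as an element $(u,v)$ of $\ker d_1=\Omega^2(S_i)=\Img d_2$, express $u$ via the matrix $M_i$ in terms of $\varphi_{jx},\psi_{jy}$ and elements of $e_x\Lambda e_k$, $e_y\Lambda e_k$, and then force $k\in\{x,y\}$ by the $R_Q^2$-contradiction. This is precisely the mechanism underlying the cited result, and your handling of the lift to $KQ$ (using $I\subseteq R_Q^2$ and that paths form a basis) is clean. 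The observation that $\Omega^2(S_i)\subseteq\rad P_i^+$ because $d_1$ is a projective cover is indeed the pivot, and your remarks on the degenerate cases $|i^\pm|=1$ are accurate.
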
 

The following result (sometimes called the {\it Triangle Lemma} \cite[Lemma 4.3]{EHS}) shows how relations 
propagate through triangles in $Q$. 

\begin{lemma}\label{lem:3.4}  Assume $Q$ contains  a triangle
        \[
 \xymatrix@R=3.pc@C=1.8pc{
    x
    \ar[rr]^{\gamma}
    && i
    \ar@<.35ex>[ld]^{\alpha}
    \\
    & j
    \ar@<.35ex>[lu]^{\beta}
  }
\]
with $\alpha\beta\prec I$. If $\gamma$ is the unique arrow $x\to i$, then $\gamma\alpha\prec I$ and 
$\beta\gamma\prec I$. If we have double arrows $\gamma,\bar{\gamma}: x\to i$, then there is a 
$\delta\in\{\gamma,\bar{\gamma}\}$ such that $\delta\alpha\prec I$ and $\beta\delta\prec I$. \end{lemma}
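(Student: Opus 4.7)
The plan is to exploit the minimal projective resolution of $S_i$ given by the five-term sequence $(*)$ together with the matrix identities $(\alpha,\ba) M_i = 0$ and $M_i (\gamma,\gamma^*)^T = 0$ that encode $d_1 d_2 = 0$ and $d_2 d_3 = 0$. First I would translate $\alpha\beta\prec I$ into syzygy data: since $\alpha\beta$ is a summand of some minimal relation $\rho\in e_i\La e_x$, grouping the terms of $\rho$ by their initial arrow gives $\rho=\alpha\sigma+\ba\tau$ with $\sigma\in e_j\La e_x$ whose length-$1$ leading term equals $\beta$. As $\rho=0$ in $\La$, the pair $(\sigma,\tau)$ lies in $\ker d_1=\Omega^2(S_i)=\Img d_2$, so $(\sigma,\tau)=M_i(u,v)^T$ for some $(u,v)\in P_i^-$.

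Next I would extract $\beta\gamma\prec I$ by analyzing leading terms of the entries of $M_i$. In Case~1, either $|i^-|=1$ (so $M_i$ is a single column) or $|i^-|=2$ with $\gamma^*:y\to i$ and $y\neq x$; in the latter subcase $\psi_{jy}\in e_j\La e_y$ is a combination of paths from $j$ to $y$, so it cannot contribute the arrow $\beta:j\to x$ as a length-$1$ term. Either way one forces the length-$1$ term of $\vf_{jx}$ to be $c\beta$ with $c\neq 0$. The identity $\vf_{jx}\gamma+\psi_{jy}\gamma^*=0$ then expands to $c\beta\gamma+(\text{length}\geq 3)+(\text{arrow }j\to y)\cdot\gamma^*=0$, whose length-$2$ component is $c\beta\gamma$ plus at most one path $j\to y\to i$ through $y\neq x$. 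Reducing modulo $JI+IJ$ shows $\beta\gamma$ is a summand of a minimal relation, hence $\beta\gamma\prec I$.

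To obtain $\gamma\alpha\prec I$ I would now repeat the same analysis at the vertex $j$, starting from the freshly-proved relation $\beta\gamma\prec I$: this places $\gamma$ as the leading length-$1$ term in the $P_x$-row of a column of $M_j$, and the identity $M_j(\alpha,\alpha^*)^T=0$ produces $c'\gamma\alpha+(\text{other length-}2\text{ paths})=0$ with $c'\neq 0$, giving $\gamma\alpha\prec I$. Case~2 (double arrows $\gamma,\bar\gamma:x\to i$) is parallel except that $y=x$, so both columns of $M_i$ lie in $e_j\La e_x\oplus e_k\La e_x$; the lifting produces a combination $c\vf_{jx}+c'\psi_{jx}$ with leading term $\beta$, and the column carrying the nonzero coefficient distinguishes a $\delta\in\{\gamma,\bar\gamma\}$ such that $\beta\delta\prec I$, with the same $\delta$ emerging from the parallel argument at $j$ to give $\delta\alpha\prec I$.

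The main obstacle will be the passage from an identity $c\beta\gamma+(\text{higher})=0$ holding in $\La$ to the statement that $\beta\gamma$ genuinely survives as a summand of a \emph{minimal} relation, that is, is nonzero in $I/(JI+IJ)$ rather than being generated by lower-order relations multiplied by paths. This is a standard homogeneous-degree argument but requires care to exclude competing length-$2$ paths $j\to i$ that could absorb $\beta\gamma$; in Case~2, one additionally has to verify that the column of $M_i$ selecting $\delta$ for $\beta\delta\prec I$ matches the column of $M_j$ selecting $\delta$ for $\delta\alpha\prec I$, so that a single $\delta\in\{\gamma,\bar\gamma\}$ witnesses both conclusions simultaneously.
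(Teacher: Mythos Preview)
The paper itself does not supply a proof of this lemma; it is quoted from \cite[Lemma~4.3]{EHS}. Your approach---reading the minimal relations off the matrix identities $(\alpha,\bar\alpha)M_i=0$ and $M_i(\gamma,\gamma^*)^T=0$ that encode the period-four resolution of $S_i$---is the standard route for results of this type and is almost certainly what the cited argument does.

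For Case~1 your outline is essentially complete. The ``main obstacle'' you raise (whether the identity $\varphi_{jx}\gamma+\psi_{jy}\gamma^*=0$ really yields $\beta\gamma$ as a summand of a \emph{minimal} relation) dissolves once you observe that $R_QI+IR_Q\subseteq R_Q^3$ because $I\subseteq R_Q^2$. Hence the degree-two component of any element of $I$ is a linear combination of the degree-two components of the fixed minimal generators $\rho_1,\dots,\rho_m$; if $\beta\gamma$ appears with nonzero coefficient in such an element, it must already appear in some $\rho_l$. No competing length-two path can ``absorb'' $\beta\gamma$.

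In Case~2 there is a genuine gap, and you have located it correctly. Writing $\varphi_{jx}^{(1)}=a_1\beta$ and $\psi_{jx}^{(1)}=a_2\beta$ (assuming for simplicity that $\beta$ is the only arrow $j\to x$), the first row of $M_i(\gamma,\bar\gamma)^T=0$ has degree-two part $a_1\beta\gamma+a_2\beta\bar\gamma$, which pins down $\beta\delta\prec I$ for a definite $\delta$ depending on which $a_r\neq 0$. But your ``parallel argument at $j$'' runs through the matrix $M_j$, whose leading coefficients bear no a~priori relation to $(a_1,a_2)$; nothing in the sketch forces the two selections of $\delta$ to agree. The clean repair is to bootstrap rather than run two independent computations: once $\beta\delta\prec I$ is secured for a specific $\delta\in\{\gamma,\bar\gamma\}$, apply the already-established Case~1 to the triangle $x\xrightarrow{\delta}i\xrightarrow{\alpha}j\xrightarrow{\beta}x$ with the relation $\beta\delta\prec I$. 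Provided $\alpha$ is the unique arrow $i\to j$, this immediately returns $\delta\alpha\prec I$ for the \emph{same} $\delta$. The residual case of double arrows $i\to j$ coexisting with double arrows $x\to i$ then needs a short separate argument (or an appeal to the tameness restrictions on $K_2^*$-type subquivers from Section~\ref{sec:2}); this is the one place where your proposal is genuinely incomplete.
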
 

We have also the following result on the neighbours of $1$-vertices \cite[see Lemma 4.4]{EHS}. 

\begin{lemma}\label{lem:3.5} Assume $i$ is a 1-vertex which is part of a triangle 

\[
  \xymatrix@R=3.pc@C=1.8pc{
    & i
    \ar[rd]^{\alpha}
    \\
  x 
    \ar[ru]^{\gamma}
    && j
   \ar[ll]_{\beta}
  }
\]

Then both $x$ and $j$ must be 2-vertices. \end{lemma}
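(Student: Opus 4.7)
The plan is to show that the conclusion is a direct consequence of Lemma \ref{lem:3.2} applied to the two arrows $\alpha$ and $\gamma$ incident with the $1$-vertex $i$, combined with the biregularity (or rather the absence of a 1-regular possibility) of the relevant neighbours. Since $i$ is a $1$-vertex we have $|i^+|=|i^-|=1$, so in the notation of the triangle the arrow $\alpha:i\to j$ is the \emph{unique} element of $i^+$, and the arrow $\gamma:x\to i$ is the \emph{unique} element of $i^-$.

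First I would rule out that $j$ is a $1$-vertex. If it were, then $|j^-|=1$, and since $\alpha\in j^-$ we would have $j^-=\{\alpha\}$. Combined with $i^+=\{\alpha\}$ from the previous paragraph, this places us exactly in the configuration forbidden by Lemma \ref{lem:3.2} (applicable since a GQT/TSP4 algebra is of infinite representation type): an arrow $\alpha:i\to j$ with $i^+=\{\alpha\}=j^-$. Contradiction. Hence $|j^-|\geq 2$, and in the biregular setting this forces $j$ to be a $2$-vertex.

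Next I would symmetrically rule out that $x$ is a $1$-vertex. If $|x^+|=1$, then because $\gamma\in x^+$ we would have $x^+=\{\gamma\}$, while we already know $i^-=\{\gamma\}$. Again Lemma \ref{lem:3.2} applied to the arrow $\gamma:x\to i$ yields a contradiction, so $|x^+|\geq 2$, and biregularity upgrades this to $x$ being a $2$-vertex.

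The argument is essentially a bookkeeping exercise: the only substantive input is Lemma \ref{lem:3.2}, applied once ``on the outgoing side'' at $i$ and once ``on the incoming side'' at $i$. There is no real obstacle — one just has to make sure the uniqueness clauses $i^+=\{\alpha\}$ and $i^-=\{\gamma\}$ supplied by the $1$-vertex hypothesis are correctly matched with the would-be uniqueness clauses $j^-=\{\alpha\}$ and $x^+=\{\gamma\}$ arising from the assumption that $j$, respectively $x$, is a $1$-vertex, so that the hypotheses of Lemma \ref{lem:3.2} are met verbatim. No use of the Triangle Lemma or syzygy computations is needed at this step; the triangle structure enters only through the identifications $\alpha\in j^-$ and $\gamma\in x^+$.
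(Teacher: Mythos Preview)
Your argument is correct. Note, however, that the paper does not supply its own proof of this lemma; it is quoted from \cite[Lemma~4.4]{EHS}. From the remark immediately following the statement, the argument in \cite{EHS} apparently establishes the complementary pair of inequalities $|x^-|\geqslant 2$ and $|j^+|\geqslant 2$ (presumably via the period-four exact sequence rather than Lemma~\ref{lem:3.2}), which under biregularity likewise forces $x$ and $j$ to be $2$-vertices.

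Your route is different and, in the biregular setting, more direct: you show $|j^-|\geqslant 2$ and $|x^+|\geqslant 2$ using only Lemma~\ref{lem:3.2} and the fact that $\alpha,\gamma$ are the unique arrows out of and into $i$. As you observe, the triangle hypothesis plays no role in your argument; the paper's own remark after the lemma in effect endorses this, pointing out that Lemma~\ref{lem:3.2} already forces neighbours of a $1$-vertex to be at least $2$-vertices. Under biregularity either half of the degree inequalities suffices, so your shortcut is entirely adequate here; the \cite{EHS} argument has the slight advantage of controlling the ``other'' pair $|x^-|,|j^+|$, which would matter if $Q$ were only assumed biserial rather than biregular.
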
 

In fact, the above result does not depend on the assumptions on $Q$. Namely, for arbitrary $Q$, the same 
arguments may be adapted to show that for a $1$-vertex $i$, its neighbours satisfy $|x^-|\geqslant 2$ and 
$|j^+|\geqslant 2$. Combining this with Lemma \ref{lem:3.2}, we could conclude that in infinite type, neighbours 
of a $1$-vertex are at least $2$-vertices. \smallskip 

Finally, the last two results \cite[see Propositions 4.5 and 4.6]{EHS} describe the properties of paths of length 
$3$ involved in minimal relations. 

\begin{lemma}\label{lem:3.6} Consider a path 
$$\xymatrix{i \ar[r]^{\alpha} & k \ar[r]^{\beta} & t \ar[r]^{\gamma} & j}$$ 
such that $\alpha\beta \not\prec I$ and $\alpha$ is the unique arrow $i\to k$. If $\alpha\beta\gamma \prec I$, then 
there is an arrow $j\to i$. \end{lemma}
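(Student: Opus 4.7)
The plan is to convert the relation $\alpha\beta\gamma\prec I$ into an explicit element of the second syzygy $\Omega^2(S_i)$ and compare it with the generators of $\Omega^2(S_i)$ described by the matrix $M_i$ of Section~\ref{sec:2}. To avoid clash with the lemma's notation, I use $\ba$ for the (possibly absent) second arrow out of $i$, with target $k'$, and let $\delta\colon x\to i$ and $\delta^*\colon y\to i$ denote the arrows ending at $i$ (with $\delta^*$ absent when $|i^-|=1$).

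First I would fix a minimal relation $\rho$ containing $\alpha\beta\gamma$ as a summand and group the paths in $\rho$ by their initial arrow out of $i$. This yields $\rho=\alpha A+\ba B$ with $A\in e_k\La e_j$ and $B\in e_{k'}\La e_j$, where $\beta\gamma$ appears as a summand of $A$ (using the uniqueness of $\alpha\colon i\to k$, the minimality of $\rho$, and the hypothesis $\alpha\beta\not\prec I$, which together prevent cancellation of this summand). Since $\rho=0$ in $\La$, the pair $(A,B)\in P_k\oplus P_{k'}$ lies in $\Omega^2(S_i)=\ker(d_1)$, with first coordinate containing the length-$2$ summand $\beta\gamma$.

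Next, using that the columns $\varphi,\psi$ of $M_i$ generate $\Omega^2(S_i)$ as a right $\La$-module, I would write $(A,B)=\varphi a+\psi b$ with $a\in e_x\La e_j$ and $b\in e_y\La e_j$. Inspecting the first coordinate, the summand $\beta\gamma$ of $A$ must be produced in $\varphi_{kx}a+\psi_{ky}b$ modulo $I$. A length comparison shows this can occur only in one of three configurations: (i) $a$ or $b$ contains a trivial path summand, forcing $j\in\{x,y\}$ and producing the desired arrow $j\to i$; (ii) $\varphi_{kx}$ or $\psi_{ky}$ contains a trivial path summand, forcing an arrow $k\to i$; or (iii) both factors are non-trivial arrows whose composition reproduces $\beta\gamma$ modulo $I$, forcing either an arrow $t\to i$ (when the intermediate vertex is $t$) or a second length-$2$ path from $k$ to $j$ equivalent to $\beta\gamma$ in $\La$.

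The remaining work is to exclude cases (ii) and (iii). Each creates an extra arrow or a parallel length-$2$ path near the triangle $i\to k\to t\to j$; combined with $\alpha$, one would either close a triangle via Lemma~\ref{lem:3.3} and then apply the Triangle Lemma~\ref{lem:3.4} to propagate a length-$2$ relation into $\alpha\beta\prec I$ (contradicting the hypothesis), or embed a wild hereditary or Ringel one-relation subalgebra \cite{Rin} into a Galois cover of $\La/J^d$ (contradicting tameness). The main obstacle is the parallel subcase of (iii), where two distinct length-$2$ paths from $k$ to $j$ become equivalent modulo $I$: controlling how this second path interacts with $\alpha$ (and possibly with $\ba$, double arrows, or loops at neighbouring vertices constrained by Section~\ref{sec:2}) while preserving the minimality of $\rho$ is the most delicate part of the argument.
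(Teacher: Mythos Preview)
The paper does not prove this lemma here; it is quoted from \cite[Proposition~4.5]{EHS}. The underlying argument is purely homological: a minimal relation $\rho\in e_iIe_j$ gives a minimal generator of $\Omega^2(S_i)$ lying over $j$, so $S_j$ appears in the top of $\Omega^2(S_i)$; since the projective cover of $\Omega^2(S_i)\cong\Omega^{-2}(S_i)$ is $P_i^-$, this forces $j\in\{x,y\}$, i.e.\ an arrow $j\to i$.

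Your setup through the identity $(A,B)=\varphi a+\psi b$ is exactly this framework, but then you take a wrong turn. The step you are missing is that minimality of $\rho$ already forces $a$ or $b$ to be a unit, with no length comparison required. Indeed, if $j\notin\{x,y\}$ then $a\in e_xJe_j$ and $b\in e_yJe_j$; lifting to $KQ$ and using $d_1d_2=0$ one obtains
\[
\rho=(\alpha\wt\varphi_{kx}+\ba\,\wt\varphi_{k'x})\,\wt a+(\alpha\wt\psi_{ky}+\ba\,\wt\psi_{k'y})\,\wt b+\alpha\,\iota_k+\ba\,\iota_{k'}\in IR_Q+R_QI,
\]
with both bracketed factors in $I$ and $\iota_k,\iota_{k'}\in I$, contradicting minimality of $\rho$. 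Thus only your case~(i) ever occurs, and the hypotheses $\alpha\beta\nprec I$ and uniqueness of $\alpha$ are not even needed for this step.

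Your case~(ii) is vacuous: since $P_i^+\to\Omega(S_i)$ is a \emph{minimal} cover, $\Omega^2(S_i)\subseteq\rad P_i^+$, so $\varphi_{kx},\psi_{ky}\in J$ and can contain no trivial path. More seriously, your plan to exclude case~(iii) via the Triangle Lemma or wild subcategories would not succeed. Tameness does not forbid an arrow $t\to i$, and a second length-$2$ path $k\to j$ congruent to $\beta\gamma$ modulo $J^3$ is entirely compatible with the quiver constraints of Section~\ref{sec:2}; neither Lemma~\ref{lem:3.3} nor Lemma~\ref{lem:3.4} produces $\alpha\beta\prec I$ from such a configuration. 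Your length comparison operates in $J^2/J^3$, which simply does not see the bimodule condition $\rho\notin R_QI+IR_Q$ that is the actual content of minimality.
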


\begin{lemma}\label{lem:3.7} Assume $Q$ contains a square 
$$\xymatrix{j\ar[r]^{\delta} & i\ar[d]^{\alpha} \\ t\ar[u]^{\gamma} & k\ar[l]_{\beta}}$$ 
with $\alpha\beta\gamma\prec I$ but $\beta\gamma\nprec I$. If $\delta$ is the unique arrow $j\to i$, then 
$\beta\gamma\delta\prec I$. \end{lemma}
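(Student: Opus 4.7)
The plan is to exploit the explicit structure of the exact sequence $(*)$ for $S_i$ recalled at the end of Section \ref{sec:2}. First, since $\alpha\beta\gamma\prec I$ is a path from $i$ to $j$ starting with the arrow $\alpha$, the minimal relation containing it must be, up to equivalence, the column relation $\alpha\,\vf_{kj}+\ba\,\vf_{k^*j}=0$ coming from $d_1d_2=0$, where $\ba:i\to k^*$ denotes a possible second arrow out of $i$, and $\vf_{ab},\psi_{ab}$ are the entries of the matrix $M_i$ representing $d_2$. Comparing the summands beginning with $\alpha$ forces $\beta\gamma$ to appear as a summand of $\vf_{kj}\in e_k\La e_j$; the hypothesis $\beta\gamma\nprec I$ is what allows us to read $\beta\gamma$ as a genuine, non-absorbed summand of the chosen representative of $\vf_{kj}$.

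Next, the same matrix $M_i$ produces, via $d_2d_3=0$, the $k$-th row relation $\vf_{kj}\,\delta+\psi_{kj^*}\,\delta^*=0$, a minimal relation of $I$ starting at $k$ and ending at $i$, with $\delta^*:j^*\to i$ a possible second arrow ending at $i$. Expanding $\vf_{kj}\,\delta$ we see the path $\beta\gamma\delta$ among its summands. The crucial point is now that the assumption that $\delta$ is the \emph{unique} arrow $j\to i$ forces $j^*\neq j$ whenever $\delta^*$ exists, so every summand of $\psi_{kj^*}\,\delta^*$ passes through the vertex $j^*$ and ends with the arrow $\delta^*\neq\delta$. Hence $\beta\gamma\delta$ cannot be cancelled by any term on the right, and one concludes $\beta\gamma\delta\prec I$.

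I expect the main obstacle to be the bookkeeping around the column/row relations of $M_i$: verifying that the particular relation read off from $d_2d_3=0$ is indeed among the minimal relations generating $I$, and choosing representatives of $\vf$ and $\psi$ so that the statement ``$\beta\gamma$ is a summand of $\vf_{kj}$'' is literal rather than merely true modulo $I$. This is the same delicate point underlying the earlier lemmas of Section \ref{sec:3}, and should be handled by the conventions fixed in Section \ref{sec:2} (in particular $d_1(x,y)=\alpha x+\ba y$ and $d_3(e_i)=(\delta,\delta^*)$).
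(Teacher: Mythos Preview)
The paper does not actually prove this lemma here: Section \ref{sec:3} merely records it as \cite[Proposition 4.6]{EHS}, so there is no in-paper argument to compare against. That said, your approach is exactly the natural one and is, in spirit, what the cited reference does: read both the column relation $\alpha\vf_{kj}+\ba\vf_{k^*j}=0$ (from $d_1d_2=0$) and the row relation $\vf_{kj}\delta+\psi_{kj^*}\delta^*=0$ (from $d_2d_3=0$) off the matrix $M_i$ of Section \ref{sec:2}, observe that $\alpha\beta\gamma\prec I$ with $\beta\gamma\nprec I$ forces $\beta\gamma$ to occur as a summand of (a lift of) $\vf_{kj}$, and then use uniqueness of $\delta$ to prevent cancellation of $\beta\gamma\delta$ in the row relation.

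The one point that deserves a line of justification, and which you flag yourself, is that the row relation $\tilde\vf_{kj}\delta+\tilde\psi_{kj^*}\delta^*\in I$ is indeed \emph{minimal}, so that one may conclude $\beta\gamma\delta\prec I$ in the sense of the fixed generating set. This follows by the duality implicit in the set-up of Section \ref{sec:2}: just as the column relations give a minimal generating set of $\Omega^2(S_i)$ (equivalently, a system of minimal relations starting at $i$), the row relations give a minimal generating set of the left module $\Omega^2_{\La^{op}}(S_i)$, i.e.\ a system of minimal relations ending at $i$. Once you state this explicitly, the argument is complete; nothing further is needed beyond the conventions already fixed at the end of Section \ref{sec:2}.
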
  

\bigskip

\section{Preparatory results}\label{sec:4}

In this paragraph we will prove a few technical results on the local structure of the Gabriel quiver near $1$-regular 
vertices. \smallskip

Assume $\La$ is a TSP4 algebra of infinite representation type with given presentation $\La=KQ/I$ such that $Q$ 
is biregular. Due to results of \cite{AGQT}, we may assume that $Q$ admits at least one $1$-vertex, so that $Q$ has a 
subquiver $$x \stackrel{\gamma}\longrightarrow i \stackrel{\alpha}\longrightarrow j$$
and $i$ is a 1-vertex. Then from the exact sequence $(*)$ for $S_i$ we know that $p_x=p_j$. As well $\La$ is symmetric, therefore the Cartan matrix of $\La$ has a submatrix (corresponding to vertices $i, j, x$) 
$$\left(\begin{matrix} a_0 & a_1 & a_1  \cr a_1 & a_2 & a_2  \cr a_1 & a_2 & a_2 \end{matrix}\right) 
\ \leqno{(\dagger)}$$

\medskip 

\begin{lemma}\label{2triangles} The quiver $Q$ does not contain a subquiver 
$$\xymatrix{&i\ar[rd]^{\alpha}&&i^*\ar[ld]^{\alpha^*}& \\ 
x\ar[ru]^{\gamma}&&j\ar[rr]_{\bar{\delta}}\ar[ll]^{\delta} && y \ar[lu]_{\alpha'}}$$ 
with both $i$ and $i^*$ being $1$-regular. 
\end{lemma}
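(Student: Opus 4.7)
The plan is to derive a contradiction purely by a dimension count, using the periodicity identity $p_i^+=p_i^-$ at every vertex together with the strict infinite-type inequality $|\hat p_i|>|p_i|$ of Lemma \ref{lem:3.1}. No relations, coverings or wild subcategories should be needed.

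First I read off the dimension vectors at the two $1$-regular vertices. Since $i$ has $i^+=\{\alpha\}$ and $i^-=\{\gamma\}$, we get $P_i^+=P_j$ and $P_i^-=P_x$, so $\hat p_i = p_j = p_x$. Analogously at $i^*$, with unique outgoing arrow $\alpha^*$ to $j$ and unique incoming arrow $\alpha'$ from $y$, one has $\hat p_{i^*} = p_j = p_y$. Hence
\[
p_x \;=\; p_j \;=\; p_y.
\]

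Next I turn to $j$. From the drawn arrows $j^-\supseteq\{\alpha,\alpha^*\}$ and $j^+\supseteq\{\delta,\bar\delta\}$, while biregularity forces $|j^\pm|\le 2$ and $i\neq i^*$ (otherwise $j$ would have only one incoming arrow while still having two outgoing ones, violating biregularity). Thus $j^-=\{\alpha,\alpha^*\}$, $j^+=\{\delta,\bar\delta\}$, $P_j^+=P_x\oplus P_y$, $P_j^-=P_i\oplus P_{i^*}$, and the equality $p_j^+=p_j^-$ reads $p_x+p_y=p_i+p_{i^*}$. Substituting $p_x=p_y=p_j$ gives $2p_j=p_i+p_{i^*}$, and in particular
\[
2|p_j| \;=\; |p_i|+|p_{i^*}|.
\]

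The contradiction is now immediate from Lemma \ref{lem:3.1}: applied at $i$ and at $i^*$ it yields the strict inequalities $|p_j|=|\hat p_i|>|p_i|$ and $|p_j|=|\hat p_{i^*}|>|p_{i^*}|$, whose sum $2|p_j|>|p_i|+|p_{i^*}|$ clashes with the equality just derived. I do not anticipate any real obstacle; the only minor point to double-check is the degenerate possibility $x=y$, but the two distinct arrows $\delta,\bar\delta$ remain linearly independent modulo the radical, so the minimal projective cover of $\Omega(S_j)=\delta\La+\bar\delta\La$ is still $P_x\oplus P_x$ and the numerical identities go through unchanged.
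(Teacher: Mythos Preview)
Your proof is correct and takes a genuinely different, more elementary route than the paper's. Both arguments begin identically: the period-$4$ identities at $i$, $i^*$ and $j$ give $p_x=p_j=p_y$ and $2p_j=p_i+p_{i^*}$. From here the paper builds a commutative diagram with exact rows comparing $\Omega^{-1}(S_j)$ with $\Omega^2(S_i)\oplus\Omega^2(S_{i^*})$, and computes the dimension vector of the resulting cokernel to be $s_i+s_{i^*}-s_j$, which has a negative $j$-coordinate. You instead pass directly to total dimensions and invoke Lemma~\ref{lem:3.1} twice: $|p_j|=|\hat p_i|>|p_i|$ and $|p_j|=|\hat p_{i^*}|>|p_{i^*}|$ sum to $2|p_j|>|p_i|+|p_{i^*}|$, contradicting the equality $2|p_j|=|p_i|+|p_{i^*}|$. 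Your argument is shorter and avoids any diagram chasing; the paper's argument, on the other hand, does not rely on the infinite-type hypothesis encoded in Lemma~\ref{lem:3.1}, extracting the contradiction from a single negative coordinate rather than a strict inequality of totals. Your handling of the degenerate cases ($i\neq i^*$ forced by biregularity, and $x=y$ still giving $P_j^+=P_x\oplus P_x$) is fine.
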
  

\begin{proof} Suppose that $Q$ admits a subquiver of the above form. Using $p^-_\bullet=p^+_\bullet$ for vertices 
$i,i^*$ and $j$, we obtain $p_x=p_j=p_y$ and $p_i+p_{i^*}=p_x+p_y=2p_j$. We denote by $p$ the dimension vector 
$p:=p_j=p_x=p_y$. \smallskip 

Now, observe that there is a commutative diagram in $\mod A$ with exact rows of the form 
$$\xymatrix@C=1cm{
0\ar[r] & S_j\ar[r]\ar[d]_{v} & P_j\ar[rr]^{\vec{\alpha \\ \alpha^*}} \ar[d]_{\vec{1\\1}} & & 
(\alpha,\alpha^*)\Lambda \cong \Omega^{-1}_{j}\ar[r]\ar[d]&0 \\ 
0\ar[r]&\Omega^2_{i}\oplus\Omega^2_{i^*} \ar[r] &P_j\oplus P_j \ar[rr]^{\vec{\alpha & 0 \\ 0 & \alpha^*}} & & 
\alpha\Lambda\oplus\alpha^*\Lambda=\Omega_{i}\oplus \Omega_{i^*} \ar[r] & 0}$$ 
where $\Omega^k_a$ denotes the $k$-th syzygy $\Omega^k(S_a)$. Clearly, $v$ is a monomorphism, but not an isomorphism, 
so its cokernel $C$ must be nonzero. On the other hand, using syzygies of $S_i$ and $S_{i^*}$, we easily get the 
following equalities of dimension vectors: 
$$[\Omega^2_i]=p_j-[\Omega_i]=p-p_i+s_i \quad\mbox{and}\quad [\Omega^2_{i^*}]=p_j-[\Omega_{i^*}]=p-p_{i^*}+s_{i^*}.$$ 
By $[X]$ we denote the dimension vector of a module $X$. Therefore, using $2p=p_i+p_{i^*}$, it follows that 
$$[C]=[\Omega^2_{i}]+[\Omega^2_{i^*}]-s_j=p-p_i+s_i+p-p_{i^*}+s_{i^*}-s_j=$$
$$=2p-(p_i+p_{i^*})+s_i+s_{i^*}-s_j=s_i+s_{i^*}-s_j,$$ 
and we obtain a contradiction, since then $[C]$ has the $j$-th coordinate negative. \end{proof} 

\medskip 

\begin{lemma}\label{lem:2vertices} There is no tame symmetric algebra $A=KQ/I$ with 
$Q=\xymatrix@R=0.6cm@C=0.45cm{x\ar@/_10pt/[rr] &&j\ar@<-.5ex>[ll]\ar@<+.4ex>[ll]}$ 
and $C_A$ of the form $C_A=\vec{a & a \\ a & a}$. \end{lemma}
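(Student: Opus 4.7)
My plan is to assume such an algebra $A = KQ/I$ exists and derive a contradiction by computing $\Omega^2(S_x)$ and comparing composition factors of a single module in two different ways.

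Since $x$ admits the unique outgoing arrow $\gamma : x \to j$, one has $\Omega(S_x) = \rad P_x = \gamma A$, and the corresponding projective cover is the map $P_j \to \rad P_x$ given by $u \mapsto \gamma u$. The assumption $C_A = \vec{a & a \\ a & a}$ forces $\dim P_x = \dim P_j = 2a$ and $\dim \rad P_x = 2a - 1$, so the kernel $\Omega^2(S_x) = \ker(P_j \to \rad P_x)$ has dimension exactly~$1$. Any $1$-dimensional submodule of $P_j$ is simple and hence is contained in $\soc P_j$; because $A$ is symmetric, $P_j$ is also the indecomposable injective envelope of $S_j$, so $\soc P_j = S_j$, forcing $\Omega^2(S_x) \cong S_j$.

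This produces the short exact sequence $0 \to S_j \to P_j \to \rad P_x \to 0$, which identifies $\rad P_x \cong P_j / \soc P_j$. Comparing composition multiplicities in the two descriptions of this module gives $[\rad P_x : S_x] = a - 1$ and $[\rad P_x : S_j] = a$ on the submodule side, versus $[\rad P_x : S_x] = a$ and $[\rad P_x : S_j] = a - 1$ on the quotient side. The equality $a - 1 = a$ is absurd, which finishes the argument.

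The main (essentially only) delicate step is pinning down the $1$-dimensional kernel as $\soc P_j$; this uses only the symmetry of $A$ (via $\nu = \mathrm{id}$ giving $\soc P_j = S_j$), so tameness is in fact not invoked anywhere in the argument.
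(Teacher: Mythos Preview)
Your argument is correct and in fact cleaner than the paper's. The paper invokes tameness: it passes to a Galois cover to force a relation $\gamma(c_1\beta+c_2\bar\beta)\in J^3$, normalises to $\gamma\bar\beta\in J^3$, and then writes down explicit monomial bases of $e_xAe_x$ and $e_xAe_j$ to conclude $\dim e_xAe_x>\dim e_xAe_j$. Your route avoids all of that: the single observation that $\gamma$ is the only arrow out of $x$ gives $\rad P_x\cong P_j/\Omega^2(S_x)$, the Cartan hypothesis forces $\dim\Omega^2(S_x)=1$, and symmetry pins this one-dimensional submodule down as $\soc P_j=S_j$. The contradiction $[\rad P_x:S_x]=a-1$ versus $[P_j/S_j:S_x]=a$ then falls out of the Cartan entries directly. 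As you note, tameness is never used; the statement holds for any symmetric algebra with this Gabriel quiver and Cartan matrix.
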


\begin{proof} Suppose there is an algebra $A$ satisfying the above properties. Let $\gamma$ denote the unique arrow $x\to j$, 
and $\beta,\bar{\beta}$, the double arrows $j\to x$. \smallskip  

Resolving the double arrows in covering, we get the infinite line with of the form 
$$\xymatrix{\dots \ar[r]^{\beta} & x & j\ar[l]_{\bar{\beta}} \ar[r]^{\beta} & x & \ar[l]_{\bar{\beta}} \dots }$$ 
Therefore, since $A$ is tame, we must have a relation of the form $\gamma(c_1\beta+c_2\bar{\beta})$ modulo 
radical cube. We may replace one of the arrows, and assume that $\gamma\bar{\beta}\in J^3$. \smallskip 

Now, the module $e_xA$ has a basis consisting of monomials starting with $\gamma$, where $\gamma$ and $\beta$ 
alternate. As a result, $e_xAe_x$ has a basis of the form $\{(\gamma\beta)^k; \ 0\leq k \leq m \}$, for some 
$m\geqslant 1$ such that $(\gamma\beta)^m$ is in the socle of $A$. But then $e_xAe_j$ has a basis consisting 
of paths $(\gamma\beta)^k\gamma$, for $0\leq k \leq m-1$, so $a_{11}=\dim_K e_xAe_x> \dim_K e_xAe_j=a_{12}$, 
and the Cartan matrix does not have a required form. \end{proof} 

\medskip 

\begin{cor}\label{exept.5} There is no GQT algebra $\La=KQ/I$, whose Gabriel quiver $Q=Q_\La$ has the form:  
$$\xymatrix{& \ar[ldd]_{\beta^*} a \ar@/^50pt/[ddd]^{\eta} & \\ 
& i \ar[rd]_{\alpha} & \\ x \ar[ru]_{\gamma} \ar[rd]_{\bar{\gamma}} && j\ar[ll]^{\beta} \ar[luu]_{\bar{\beta}} \\ 
& \bar{i} \ar[ru]_{\alpha^*} \ar@/^50pt/[uuu]^{\sigma} & }$$ 
with $\gamma\alpha\prec I$. \end{cor}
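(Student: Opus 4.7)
The plan is to reduce the question to Lemma~\ref{lem:2vertices}, by passing to the idempotent subalgebra at the pair of vertices $\{x,j\}$. Suppose for contradiction that $\La=KQ/I$ is a GQT algebra with the prescribed Gabriel quiver and $\gamma\alpha\prec I$. Since $i$ is $1$-regular, $\gamma$ is the unique arrow $x\to i$, $\alpha$ the unique arrow $i\to j$, and $\beta$ the unique arrow $j\to x$ in $Q$. Applying Lemma~\ref{lem:3.4} cyclically to the triangle $x\to i\to j\to x$, the relation $\gamma\alpha\prec I$ propagates to $\alpha\beta\prec I$ and $\beta\gamma\prec I$, so all three consecutive compositions around this triangle lie in $I$. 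Moreover, the exact sequence $(*)$ for the $4$-periodic simple $S_i$ forces $p_x=p_j$, whence the Cartan submatrix at $\{i,j,x\}$ has the form $(\dagger)$; in particular $c_{xx}=c_{jj}=c_{jx}=a_2$.

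Next, set $e:=e_x+e_j\in\La$ and form the idempotent subalgebra $A:=e\La e$. As a corner of a symmetric algebra, $A$ is symmetric (the bilinear form restricts to a non-degenerate one); as a corner of a tame algebra, $A$ is tame (by the standard idempotent-truncation/covering arguments alluded to in Section~\ref{sec:2}). Its Cartan matrix at $\{x,j\}$ equals $\left[\begin{smallmatrix}a_2 & a_2\\ a_2 & a_2\end{smallmatrix}\right]$, already matching the prescribed form of Lemma~\ref{lem:2vertices}. The remaining task is to show that the Gabriel quiver $Q_A$ coincides with the one in Lemma~\ref{lem:2vertices}, i.e.\ a single arrow $x\to j$ together with a double arrow $j\rightrightarrows x$. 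For the direction $j\to x$: the classes of $\beta$ and of $\bar\beta\beta^*$ in $e_j J(A)e_x/e_j J(A)^2 e_x$ are linearly independent, since $\beta$ is an arrow of $Q$ while $\bar\beta\beta^*\not\prec I$ by Lemma~\ref{lem:3.3} (there is no arrow $x\to j$ in $Q$), and neither path factors through $\{x,j\}$ at an intermediate step. For the direction $x\to j$: the only length-$2$ paths $x\to j$ in $Q$ are $\gamma\alpha$ and $\bar\gamma\alpha^*$, no length-$3$ path $x\to j$ exists in $Q$ (a quick check using the out-neighbourhoods of $x,i,\bar i,a$), and the minimal relation containing $\gamma\alpha$ --- together, if needed, with a fresh application of Lemma~\ref{lem:3.4} to the triangle $\{x,\bar i,j\}$ --- collapses them into a single independent generator. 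With $Q_A$ pinned down, Lemma~\ref{lem:2vertices} applied to $A$ delivers the desired contradiction.

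The main obstacle will be the precise identification of $Q_A$, and in particular the count of arrows $x\to j$. If the minimal relation containing $\gamma\alpha$ also involves $\bar\gamma\alpha^*$ with a non-zero coefficient, both length-$2$ paths collapse to one class and we are immediately in shape. In the opposite case, one must separately force $\bar\gamma\alpha^*\prec I$ by invoking the Triangle Lemma at the neighbouring triangle $\{x,\bar i,j\}$, and one must rule out that a longer path through $a$ (or through the $2$-cycle $\eta,\sigma$ between $a$ and $\bar i$) yields a spurious extra generator $x\to j$ in $A$; an application of Lemma~\ref{lem:3.7} to an appropriate square through $a$ should handle this. This relational bookkeeping, rather than any conceptual leap, is where the genuine work of the proof resides.
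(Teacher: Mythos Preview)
Your overall strategy---pass to the idempotent algebra $A=e\La e$ with $e=e_x+e_j$, compute the Cartan matrix from $p_x=p_j$, and invoke Lemma~\ref{lem:2vertices}---is exactly the paper's. The difference lies entirely in how one controls the arrows $x\to j$ in $Q_A$, and here your sketch has a genuine gap.

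The paths from $x$ to $j$ in $Q$ avoiding $\{x,j\}$ are not just $\gamma\alpha$ and $\bar\gamma\alpha^*$: there is the whole family $\bar\gamma(\sigma\eta)^k\alpha^*$ for $k\geq 0$ (going around the $2$-cycle between $\bar i$ and $a$). You note this possibility in your last paragraph, but the tools you propose do not dispose of it. Lemma~\ref{lem:3.7} concerns length-$3$ paths in a square and does not obviously kill these longer paths; and the Triangle Lemma applied to $\{x,\bar i,j\}$ would need one of $\bar\gamma\alpha^*,\alpha^*\beta,\beta\bar\gamma$ to be $\prec I$ as input, which is not available a priori. The paper's route is different: first observe $\beta^*\gamma\nprec I$ (no arrow $i\to a$), then force $\beta^*\bar\gamma\prec I$ by a tameness/wild-subcategory argument (otherwise the four arrows $\eta,\beta^*,\gamma,\bar\gamma$ give a wild hereditary piece in covering). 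Now the triangle $(\beta^*,\bar\gamma,\sigma)$ at $(a,x,\bar i)$ is activated, and Lemma~\ref{lem:3.4} gives $\bar\gamma\sigma\prec I$. Combined with the relation $\gamma\alpha=\bar\gamma z\alpha^*$ one can adjust the presentation so that $\bar\gamma\sigma=0$. This single zero relation collapses \emph{all} the $\bar\gamma(\sigma\eta)^k\alpha^*$ at once: every path from $x$ in $A$ factors through $\bar\gamma\alpha^*$, so any two candidate arrows $x\to j$ in $Q_A$ differ by a unit in $e_jAe_j$ and are linearly dependent---the contradiction.

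In short, the missing ingredient is the relation $\bar\gamma\sigma\prec I$ (hence $=0$ after adjusting), obtained via $\beta^*\bar\gamma\prec I$ and the Triangle Lemma on the triangle through $a$, not through $j$. Once you have that, your argument and the paper's coincide.
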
 

\begin{proof} Suppose $\La$ is an GQT algebra with the above quiver. First, observe that $\beta^*\gamma\nprec I$, 
by Lemma \ref{lem:3.3}, because we have no arrows $i\to a$. It follows that $\beta^*\bar{\gamma}\prec I$, 
since otherwise, we would get the following wild subcategory in the covering
$$\xymatrix@R=0.3cm{& \ar[ldd]_{\beta^*} a \ar@/^30pt/[ddd]^{\eta} & \\ 
& i & \\ x \ar[ru]_{\gamma} \ar[rd]_{\bar{\gamma}} && \\ 
& \bar{i}  & }$$

Using the Triangle Lemma \ref{lem:3.4}, we infer that also $\bar{\gamma}\sigma\prec I$, since we have 
a triangle $(\beta^* \ \bar{\gamma} \ \sigma)$. \smallskip 

Now, consider the minimal relation involving $\gamma\alpha$. After possibly adjusting $\alpha$ and $\gamma$, 
we may assume that $\gamma\alpha=\bar{\gamma}z\alpha^*$, for some $z\in \La$. Consequently, any path from 
$x$ factorizes through $\bar{\gamma}$, and hence, $\bar{\gamma}\sigma$ is involved in a minimal relation 
of the form 
$$\bar{\gamma}\sigma=\gamma\alpha z_1+ \bar{\gamma}z_2=\bar{\gamma}(z\alpha^*z_1+z_2),$$ 
so changing presentation $\sigma:=\sigma-z\alpha^*z_1+z_2$, we can assume $\bar{\gamma}\sigma=0$. \smallskip 

Finally, consider the idempotent algebra $A=(e_x+e_j)\Lambda(e_x+e_j)$. Its Gabriel quiver contains arrow 
$\beta:j\to x$ induced from $Q$. Moreover, $\bar{\beta}\beta^*\nprec I$, due to Lemma \ref{lem:3.3}, since 
there are no arrows $x\to j$ in $Q$. Hence, there is another arrow $\wt{\beta}:j\to x$ in $Q_A$, which 
is induced from the path $\bar{\beta}\beta^*$. The algebra $A$ is symmetric, so we must have at least one 
arrow $x\to j$, and by Lemma \ref{lem:2vertices}, we cannot have only one arrow $x\to j$, because $i$ is 
$1$-regular, so $p_x=p_j$, and hence $C_A$ has equal columns. As a result, we have two arrows $x\to j$, which 
can be identified with paths $u,v$ in $Q$. Both of these paths start with $x$, so they are of the form 
$u=\bar{\gamma}u'$ and $v=\bar{\gamma}v'$, for some $u',v'\in\La$. But $\bar{\gamma}\sigma=0$, so both $u'$ 
and $v'$ must start with $\alpha^*$, and consequently, we obtain that $u=\bar{\gamma}\alpha^*u''$ and 
$v=\bar{\gamma}\alpha^*v''$, for $u'',v''\in e_j\La e_j=e_jAe_j$. Because $u,v$ represent arrows in $Q_A$ and 
$\bar{\gamma}\alpha^*\in J_A$, we conclude that $u''$ and $v''$ does not belong to $J_A$, so they are identified 
with units in $K$. It follows that $\lambda u+ \mu v=0$, for some $\lambda,\mu\in K$, which is impossible, since 
$u,v$ are independent arrows in $J_A/J^2_A$. \end{proof}

\medskip 

Let us finish this section with the following observation. 

\begin{lemma}\label{observ} $Q$ does not contain a subquiver of the form:  
$$\xymatrix{&i\ar[rd]^{\alpha} & \\ 
x\ar[ru]^{\gamma} \ar[rd]_{\bar{\gamma}} && j \ar@<-0.4ex>[ll]_{\beta} \ar@<+0.4ex>[ll]^{\bar{\beta}} \\ 
&a\ar[ru]_{\alpha^*} &}$$ 
\textit{where i is a 1-vertex and }$\gamma\alpha\prec I$. 
\end{lemma}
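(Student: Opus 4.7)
The plan is to derive a contradiction by analyzing the idempotent subalgebra $A=(e_x+e_j)\Lambda(e_x+e_j)$ and showing that its Gabriel quiver together with its Cartan matrix falls into the forbidden configuration of Lemma \ref{lem:2vertices}.

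First, by Lemma \ref{lem:3.5} the neighbors $x$ and $j$ of the $1$-vertex $i$ are both $2$-regular, so $j^+=\{\beta,\bar\beta\}$, $j^-=\{\alpha,\alpha^*\}$, $x^+=\{\gamma,\bar\gamma\}$, and $x^-=\{\beta,\bar\beta\}$; in particular $\bar\gamma$ is the only arrow $x\to a$ and $\alpha^*$ the only arrow $a\to j$. Next, the Triangle Lemma \ref{lem:3.4} applied to $\gamma\alpha\prec I$ in the triangle containing the double arrows $\beta,\bar\beta\colon j\to x$ produces $\delta\in\{\beta,\bar\beta\}$ with $\alpha\delta,\delta\gamma\prec I$; after relabeling I assume $\alpha\beta,\beta\gamma\prec I$.

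From the exact sequence $(*)$ for the $1$-vertex $i$ one gets $p_x=p_j$, and by symmetry of the Cartan matrix of $\Lambda$ the $2\times 2$ block of $C_\Lambda$ at $\{x,j\}$ takes the form $\vec{a & a \\ a & a}$ with $a\geq 2$ (the inequality holding since $\beta,\bar\beta$ are linearly independent elements of $e_j\Lambda e_x$). Consequently the tame symmetric algebra $A$ has the same Cartan matrix.

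The core step is to identify $Q_A$. Since every arrow leaving $j$ lands at $x$, no length-$\geq 2$ path $j\to x$ in $\Lambda$ avoids the idempotent set $\{x,j\}$, so $Q_A$ has exactly two arrows $j\to x$, namely $\beta$ and $\bar\beta$. For arrows $x\to j$ there is no length-$1$ contribution, while the two length-$2$ candidates $\gamma\alpha$ and $\bar\gamma\alpha^*$ become linearly dependent in $J_A/J_A^2$ via the minimal relation afforded by $\gamma\alpha\prec I$, contributing exactly one arrow. It remains to show that no higher-length path $x\to j$ bypassing $\{x,j\}$ adds a further arrow to $Q_A$. Every path $x\to j$ in $Q$ must end with one of the arrows $\alpha,\alpha^*$, and the only paths $x\to i$ and $x\to a$ of length $1$ are $\gamma$ and $\bar\gamma$; thus additional arrows in $Q_A$ could only arise from length-$\geq 3$ detours that leave $x$ and return to $i$ or $a$, which would force either an extra loop at $x$ (excluded, since $x$ already has full valence $2$, by Lemma \ref{lem:3.6}) or the existence of a second outgoing arrow at $a$ (relevant only if $a$ is $2$-regular). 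In the latter case, iterated applications of the Triangle Lemma \ref{lem:3.4} along $a$'s additional outgoing arrow, combined with Lemma \ref{lem:3.6}, propagate the relations so that any such detour is congruent modulo $I$ to a combination of $\gamma\alpha$ and $\bar\gamma\alpha^*$, preserving the single arrow $x\to j$ in $Q_A$.

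Therefore $Q_A$ consists of exactly one arrow $x\to j$ and exactly two arrows $j\to x$, and $C_A=\vec{a & a \\ a & a}$, which directly contradicts Lemma \ref{lem:2vertices}. The main obstacle in this plan is the final part of the identification of $Q_A$, namely controlling the contribution of length-$\geq 3$ paths $x\to j$ that bypass $\{x,j\}$ in the case where $a$ is $2$-regular; this is where iteration of the Triangle Lemma and a wild-covering argument (ruling out wild hereditary subcategories arising from the second outgoing arrow of $a$) are required.
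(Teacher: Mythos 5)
Your overall strategy -- reduce to the two--vertex idempotent algebra $A=(e_x+e_j)\Lambda(e_x+e_j)$, note that $p_x=p_j$ forces $C_A=\vec{a & a \\ a & a}$, and then invoke Lemma \ref{lem:2vertices} -- is reasonable and is close in spirit to what the paper does (the paper ultimately also reaches a contradiction through Cartan numbers of an idempotent algebra, though it contradicts symmetry of $C_R$ directly rather than quoting Lemma \ref{lem:2vertices}). However, the step you yourself flag as ``the main obstacle'' -- showing that $Q_A$ has exactly \emph{one} arrow $x\to j$ -- is not actually carried out, and it is precisely where all the work lies. Your justification that $\gamma\alpha$ and $\bar{\gamma}\alpha^*$ ``become linearly dependent in $J_A/J_A^2$ via the minimal relation'' does not hold up: the minimal relation has the form $\gamma\alpha=\bar{\gamma}z\alpha^*$ with $z\in e_a\Lambda e_a$, and the paper shows that after adjusting presentations one in fact gets $\bar{\gamma}\alpha^*=0$. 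So neither length--$2$ candidate survives as an arrow of $Q_A$; the single arrow $x\to j$ is represented by a \emph{longer} path $\bar{\gamma}\sigma\alpha^*$ through a cycle at $a$. Moreover, $\bar{\gamma}\alpha^*=0$ forces $a$ to have a second outgoing arrow and a second incoming arrow (otherwise $\bar{\gamma}$ lies in the socle), so the ``detour through $a$'' case you defer is not an exceptional branch but the only case that occurs, and it cannot be dispatched by ``iterated applications of the Triangle Lemma'': one must control \emph{all} cycles at $a$ avoiding $x,j$, since each independent such cycle could a priori contribute a further arrow $x\to j$ in $Q_A$ (note that a configuration with two arrows in each direction between $x$ and $j$ is not automatically wild -- the paper's Corollary \ref{exept.5} reaches exactly that configuration and needs a separate ad hoc argument to kill it).

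The paper closes this gap by passing to the four--vertex idempotent algebra $R=(e_i+e_x+e_j+e_a)\Lambda(e_i+e_x+e_j+e_a)$: it shows $Q_R$ acquires a single loop $\sigma$ at $a$, proves $\bar{\gamma}\sigma\nprec I_R$ and $\sigma^2\prec I_R$ (via wild subcategories in coverings), and then runs a socle analysis identifying the socle of $e_jR$ as $(\beta\bar{\gamma}\sigma\alpha^*)^m$; this yields explicit bases giving $\dim e_xRe_j=m$ but $\dim e_jRe_x=m+1$ (the extra element being $\bar{\beta}$), contradicting symmetry of $R$. None of this structure (the vanishing $\bar{\gamma}\alpha^*=0$, the forced $2$--regularity of $a$, the loop $\sigma$ with $\sigma^2\prec I_R$, the socle computation) appears in your proposal, so as written the argument is a plan rather than a proof. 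A minor additional point: your appeal to Lemma \ref{lem:3.6} to exclude a loop at $x$ is misplaced -- that lemma concerns length--$3$ paths in relations; the correct (and easy) reason is that $x^+=\{\gamma,\bar{\gamma}\}$ and $x^-=\{\beta,\bar{\beta}\}$ already exhaust the valence of $x$.
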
 

\begin{proof} First, by Lemma \ref{lem:3.3}, we may choose one arrow, say $\beta$, with $\alpha\beta,\beta\gamma\prec I$. 
Actually, using the exact sequence for $S_i$ (and adjusting $\beta$ if necessary), we may assume that 
$\alpha\beta=0=\beta\gamma$; see also part (1a) in the proof of \ref{prop:4.2}. In particular, it follows that 
$\bar{\beta}\gamma \nprec I$, since otherwise, we get $\bar{\beta}\gamma=u\bar{\beta}\gamma=\dots=0$, and then 
we have an arrow $\gamma:x\to i$ in $\soc(\La)$. \smallskip 

Further, because $\Lambda$ is a tame algebra, one of $\beta\bar{\gamma},\bar{\beta}\bar{\gamma}$ is involved 
in a minimal relation. Hence we conclude from \ref{lem:3.3} that also $\bar{\gamma}\alpha^*\prec I$. We 
may write (possibly adjusting $\alpha^*$): 
$$\bar{\gamma}\alpha^*=\gamma\alpha z_1,$$ 
for some $z_1\in\Lambda$, since $i$ is a $1$-vertex. On the other hand, we know that $\gamma\alpha\prec I$, so 
we can write $\gamma\alpha=\bar{\gamma}z\alpha^*$ (see (1b) in the proof of \ref{prop:4.2}), and hence we get 
$$\bar{\gamma}\alpha^*=\bar{\gamma} z \alpha^* z_1.$$ 
As a result, after replacing $\alpha^*:=\alpha^*-z\alpha^*z_1$, we can assume that $\bar{\gamma}\alpha^*=0$. \smallskip 

There must be an arrow $\bar{\alpha}^*$ in $Q$ different from $\alpha^*$ starting at $a$. Otherwise we would 
have $\bar{\gamma}J_{\La} = \bar{\gamma}\alpha^*\La = 0$ and $\bar{\gamma}$ would be in the socle and in 
$e_x\La e_a$, which is not possible for symmetric $\La$. Similarily, we have an arrow $\bar{\gamma}^*\neq\bar{\gamma}$ 
ending at $a$. \medskip 

Now, consider the idempotent algebra $R=e\Lambda e$, where $e=e_i+e_x+e_j+e_a$. Then, its Gabriel quiver 
contains all the arrows $\gamma,\bar{\gamma},\alpha,\alpha^*,\beta,\bar{\beta}$, and we have $\bar{\gamma}\alpha^*=0$ 
in $R$. Moreover, these arrows exhaust all arrows in $Q_R$ with source or target equal to $i,x$ or $j$. As a result, 
besides these arrows, we can only have (at most one) loop at $a$. Since $R$ is also tame symmetric, we can prove 
as above that there must be at least one arrow in $Q_R$ starting at $a$ which is different from $\alpha^*$. As a result, 
there is a loop $\sigma$ in $Q_R$ at vertex $a$. Additionally, $\sigma$ may be taken as the cycle at $a$ in $Q$ 
starting with $\bar{\alpha}^*$. \smallskip 

Observe also that $\bar{\gamma}\sigma\nprec I_R$. Indeed, because $\bar{\gamma}\alpha^*=0$ and $\gamma\alpha\prec I$, 
we conclude that if $\bar{\gamma}\sigma\prec I_R$, then it is involved in a minimal relation of the form 
$\bar{\gamma}\sigma=\bar{\gamma}z$, where $z\in\sigma R$. But then, by repeating substitution, we get 
$\bar{\gamma}\sigma=0$, which gives an arrow $\bar{\gamma}$ in the socle of $R$, a contradiction. \smallskip 

Next, we claim that $\sigma^2 \prec I_R$. Otherwise we have the following wild subcategory in covering 
$$\xymatrix@R=0.5cm{&&& i &&& \\ &&& x \ar[d]_{\bar{\gamma}} \ar[u]^{\gamma} &&& \\ 
i & \ar[l]_{\gamma} x \ar[r]^{\bar{\gamma}} & a & \ar[l]_{\sigma} a & 
\ar[l]_{\sigma} a \ar[r]^{\alpha^*} & j & \ar[l]_{\alpha} i}$$ 

Recall that $\gamma\alpha\prec I$ and it is involved in a minimal relation of the form 
$\gamma\alpha=\bar{\gamma} z \alpha^*$, for some $z\in \La$. It follows that there is an induced relation 
$\gamma\alpha=\bar{\gamma}\omega\alpha^*$ in $R$, and consequently, we have $e_xRe_j=\bar{\gamma}R\alpha^*$. \medskip 

Further, we will identify the basis of local algebra $R_a=e_aRe_a$. We note that there is some monomial in the 
arrows, say $X= \beta q$, which spans the socle of $e_jR$. Then $q$ ends with $\alpha$ or $\alpha^*$. It must 
end in $\alpha^*$, because otherwise the rotation $q\beta$ would be zero. But the algebra $R$ is symmetric, so the 
rotation also is non-zero in the socle. Hence $q= q'\alpha^*$ and $X=\beta q'\alpha^*$. \smallskip 

Clearly $q'$ starts with $\bar{\gamma}$, since $\beta\gamma=0$, and then 
$X= \beta \bar{\gamma}\sigma \alpha^*\ldots \alpha^*$, because $\sigma^2\prec I$. Therefore 
$X=(\beta\bar{\gamma}\sigma\alpha^*)^m$ for some $m\geq 1$. Then by rotation $(\bar{\gamma}\sigma\alpha^*\beta)^m$
spans the socle of $e_xR$ and both $(\sigma\alpha^*\beta\bar{\gamma})^m$ and $(\alpha^*\beta\bar{\gamma}\sigma)^m$ 
span the socle of $e_aR$. In particular, they are equal up to a scalar. \smallskip 

Concluding, we obtain a basis of the local algebra $e_aRe_a=e_a\Lambda e_a$ formed by the following paths: 
$$(\sigma\alpha^*\beta\bar{\gamma})^k \qquad \mbox{ and } \qquad (\alpha^*\beta\bar{\gamma}\sigma)^k,$$ 
for $0\leq k \leq m$, and $(\sigma\alpha^*\beta\bar{\gamma})^m=(\alpha^*\beta\bar{\gamma}\sigma)^m$ spans the socle 
of $R_a$. We denote by $C$ and $C_*$ the cycles $C=\sigma\alpha^*\beta\bar{\gamma}$, and 
$C_*=\alpha^*\beta\bar{\gamma}\sigma$, respectively. \medskip 

Finally, because $C^m=C_*^m$ is in the socle, all initial subpaths of $C$ and $C_*$ are independent. 
Moreover, any path from $x$ to $j$ starting with $\gamma\alpha$ factors through $\bar{\gamma}$. As a 
result, involving previous relations (and rotations of $C$ and $C_*$ in the socle), we conclude that 
$e_x Re_j$ admits a basis of the form 
$$\{(\bar{\gamma}\sigma\alpha^*\beta)^k\bar{\gamma}\sigma\alpha^*; \ 0\leq k \leq m-1\},$$ 
so in particular, we have $c_{jx}=\dim e_xRe_j=m$. On the other hand, we similarily get the following 
$m$ independent paths 
$$(\beta\bar{\gamma}\sigma\alpha^*)^k\beta \in e_j R e_x,$$ 
for $0\leq k \leq m-1$. But $\bar{\beta}$ is not involved in any relation, so it is another element 
of the basis, and we get $c_{xj}=\dim e_j Re_x = m+1> c_{jx}$, which is impossible for a symmetric 
algebra $R$. \end{proof} \medskip

\bigskip

\section{1-vertices in triangles}\label{sec:4+} 

In this section we present a crucial result, which describes the local structure of $Q$ near $1$-vertices 
in triangles (see Proposition \ref{prop:4.2}). \smallskip 

The main aim of this section is to prove the following result. 

\begin{prop}\label{prop:4.2} If $i\in Q_0$ is a $1$-vertex in a triangle, then $Q$ contains a block of the form  
$$\xymatrix@C=1.2cm@R=0.6cm{&\bullet\ar[rdd]& \\ &\bullet_{i}\ar[ld]& \\ 
\bullet\ar[ruu]\ar@/_5pt/[rr]&&\bullet\ar[lu]\ar[ld] \\ &\circ\ar[lu]&}$$ \end{prop}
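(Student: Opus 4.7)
The plan is to start from the given triangle $x \stackrel{\gamma}{\to} i \stackrel{\alpha}{\to} j \stackrel{\beta}{\to} x$ and identify, one by one, the extra arrows forced by the $2$-regularity of $x$ and $j$ (which holds by Lemma \ref{lem:3.5}). Write the additional arrows as $\bar{\gamma}:x\to y$ and $\bar{\beta}:u\to x$ at $x$, and $\bar{\alpha}:j\to v$ and $\alpha^{*}:w\to j$ at $j$. The target of the proof is to show $y=w$ is an outlet and $u=v=:i^{*}$ is a second $1$-vertex connected as in the displayed block.

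First I would fix the triangle relations. After the standard adjustment coming from the exact sequence $(*)$ for $S_i$, one may take $\alpha\beta\prec I$; then, because $\gamma$ is the unique arrow $x\to i$ (as $i$ is a $1$-vertex), the Triangle Lemma \ref{lem:3.4} gives $\gamma\alpha\prec I$ and $\beta\gamma\prec I$. The Cartan identity $(\dagger)$, which is a consequence of $p_j=p_x$ coming from $(*)$, records the equalities $c_{xx}=c_{jj}=c_{jx}=c_{xj}$ and $c_{ij}=c_{ix}$; these will be used repeatedly to dispose of wrong-shape candidates for the extra arrows.

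Next I would locate the outlet. For each possible target of $\bar{\gamma}$, I would analyse the relations forced by Lemmas \ref{lem:3.3}, \ref{lem:3.6}, and \ref{lem:3.7}, and check that every configuration not of the displayed form opens a Galois covering in which a $K_2^{\pm}$, $K_2^{*}$, or Ringel one-relation wild subcategory appears. The critical case, where $x$ and $j$ would be connected by double arrows $j\rightrightarrows x$ together with an external outlet pair $x\to a\to j$, is ruled out directly by Lemma \ref{observ}; the opposite orientation is handled by Lemma \ref{lem:2vertices} combined with $(\dagger)$. A symmetric analysis for $\alpha^{*}$, using Lemma \ref{lem:3.3} to force the relation $\bar{\gamma}\alpha^{*}$ as a detour, then forces $y=w$, producing the outlet vertex and its two arrows in the block.

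Finally I would identify the upper $1$-vertex $i^{*}$ dually. Applying the same scheme to $\bar{\beta}$ and $\bar{\alpha}$, and using the Cartan identity, forces $u=v=:i^{*}$; the alternatives in which $i^{*}$ would carry an additional arrow, or would sit in a second triangle sharing an edge with $(\gamma,\alpha,\beta)$, are precisely those forbidden by Lemma \ref{2triangles} and Corollary \ref{exept.5}. The main obstacle is the outlet step: the enumeration of admissible targets of $\bar{\gamma}$ and sources of $\alpha^{*}$, together with the verification that each degenerate case produces an explicit wild Galois-covering subcategory, is the most delicate part of the argument—which is exactly why Lemma \ref{observ} had to be prepared beforehand. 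Once the outlet is fixed, biregularity together with $(\dagger)$ leaves essentially no room, and the identification of $i^{*}$ as the required second $1$-vertex becomes forced.
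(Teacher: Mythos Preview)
Your opening step contains an error that cascades through the rest of the sketch. You claim that ``after the standard adjustment coming from the exact sequence $(*)$ for $S_i$, one may take $\alpha\beta\prec I$'' and then deduce $\gamma\alpha\prec I$ via the Triangle Lemma. This is not available: the exact sequence only gives an element $m\in e_jJe_x$ with $\alpha m=0$, and $m$ need not lie in $\beta+J^2$. In fact the case $\gamma\alpha\nprec I$ (equivalently $\alpha\beta\nprec I$, $\beta\gamma\nprec I$) genuinely occurs and requires its own argument---this is exactly Lemma~\ref{lem:triangleI}, which you do not invoke. So your proposal silently discards one of the two main cases.

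For the remaining case $\gamma\alpha\prec I$, your plan to ``enumerate admissible targets of $\bar\gamma$ and sources of $\alpha^*$ and exhibit a wild covering subcategory in each wrong configuration'' is the right spirit but is missing the central technique. The paper's proof shows that working directly in $Q$ does not give enough control to force $\bar i=i^*$ (your $y=w$) or $a=b$ (your $u=v$); instead one must pass to a hierarchy of idempotent algebras $R=(e_i{+}e_x{+}e_j)\Lambda(e_i{+}e_x{+}e_j)$, then $\bar R$, $R^*$, $\hat R$, $\widetilde R$, determine their Gabriel quivers explicitly, and locate the wild pieces there. This corner-algebra reduction is what makes the argument go through---steps (2)--(8) of the paper's proof are entirely devoted to it, and Lemma~\ref{lem:2vertices} is applied not to $\Lambda$ but to two-vertex idempotent algebras $f\Lambda f$. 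Your sketch never mentions idempotent algebras, and without them the promised enumeration does not close: for instance, when $\bar i\neq i^*$ there is no obvious wild configuration visible in $Q$ itself, and the Cartan identity $(\dagger)$ alone is far too weak to exclude it. Similarly, showing that $a=b$ is $1$-regular (your final step) is not ``dual'' to the outlet identification and requires its own case analysis in $\widetilde R$, culminating in the use of Corollary~\ref{exept.5} in a way your outline does not anticipate.
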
 \medskip 

Fix a GQT algebra $\La=KQ/I$ and let us start with the following observation. 

\begin{lemma}\label{lem:triangleI} Let $i$ be a $1$-vertex in a triangle 
$$\xymatrix@C=0.5cm{&i\ar[rd]^{\alpha}&\\ x\ar[ru]^{\gamma}&&j\ar[ll]_{\beta}} $$ 
with $\gamma\alpha\nprec I$. Then $i$ is contained in the following block of $Q$: 
$$\xymatrix@R=0.45cm{ & \ar[ldd] \bullet & \\ & \bullet^i \ar[rd]_{\alpha} & \\ 
\bullet \ar[rd] \ar[ru]_{\gamma} && \ar[ll]^{\beta} \bullet \ar[luu] \\ 
& \circ \ar[ru] & }$$\end{lemma}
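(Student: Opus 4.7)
The plan is to reconstruct the block step by step from the given triangle $(x,i,j)$ and the hypothesis $\gamma\alpha\nprec I$, combining the structural results of Section~\ref{sec:3} with a careful reading of the minimal projective resolution $(*)$ of $S_i$. Since $i$ is a $1$-vertex in the triangle, Lemma~\ref{lem:3.5} immediately forces both $x$ and $j$ to be $2$-vertices. This furnishes a second arrow $\bar\gamma:x\to a$ out of $x$, a second arrow $\beta^*:y\to x$ into $x$, a second arrow $\alpha^*:j\to y'$ out of $j$, and a second arrow $\bar\alpha:a'\to j$ into $j$; the $1$-regularity of $i$ prevents any of $a,a',y,y'$ from being $i$ itself.

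Next, for a $1$-vertex $i$ the sequence $(*)$ collapses to
$$0\to S_i\to P_i\xrightarrow{\gamma}P_x\xrightarrow{d_2}P_j\xrightarrow{\alpha}P_i\to S_i\to 0,$$
with $d_2\in e_j\La e_x$ satisfying $\alpha d_2=0=d_2\gamma$. The Triangle Lemma~\ref{lem:3.4} applied to $(x,i,j)$, combined with uniqueness of $\gamma$ (forced by $i$ being $1$-regular) and the hypothesis $\gamma\alpha\nprec I$, yields $\alpha\beta\nprec I$; hence the $\beta$-component in the leading term of $d_2$ must vanish, and $d_2$ is forced to start with $\alpha^*$. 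Writing $d_2=\alpha^* w$ with $w\in e_{y'}\La e_x$, the conditions $\alpha d_2=0$ and $d_2\gamma=0$ translate into minimal relations $\alpha\alpha^* w\prec I$ and $\alpha^* w\gamma\prec I$, and minimality of $d_2$ (after suitable readjustment of arrow representatives) pins $w$ down to a single arrow $y'\to x$; by Lemma~\ref{lem:3.3} this arrow must coincide with the pre-existing $\beta^*$, forcing $y=y'$ and producing the parallel composition $j\xrightarrow{\alpha^*}y\xrightarrow{\beta^*}x$ alongside $\beta$. A symmetric analysis of $d_2\gamma=0$, now using $\bar\gamma$ on the right end of $d_2$, produces the identification $a=a'$ and closes the triangle $(x,a,j)$ via $\beta$.

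Finally, $a$ is verified to be an outlet by the standard covering argument of Section~\ref{sec:2}: any extra arrow incident to $a$ would produce, in a Galois covering of $\La/J^3$, a wild subcategory of one of the shapes from Ringel's list~\cite{Rin}, contradicting tameness; while the degenerate configurations in which $y$ coincides with $a$ or with another $1$-vertex are ruled out by Lemmas~\ref{2triangles} and~\ref{observ}. The main obstacle will be the second paragraph: translating $\alpha d_2=0=d_2\gamma$ into genuine minimal relations requires simultaneous readjustment of the representatives of the arrows $\alpha,\beta,\gamma,\alpha^*,\beta^*,\bar\gamma,\bar\alpha$, of the same delicate kind as in the proofs of Corollary~\ref{exept.5} and Lemma~\ref{observ}. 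Once this bookkeeping is carried out, the remaining identifications and the outlet status of $a$ follow routinely.
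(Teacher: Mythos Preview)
Your approach is genuinely different from the paper's, and the central step does not go through as written. The paper proves the identifications $i^*=\bar i$ and $a=b$ (your $a=a'$ and $y=y'$) almost entirely by covering/wildness arguments: it first excludes double arrows $j\to x$ via a $K_2^*$ configuration, then forces $i^*=\bar i$ using Lemma~\ref{2triangles} together with a $\wt{\wt{\bD}}_6$ subcategory in covering, then gets $a=b$ by another $\wt{\wt{\bD}}_6$-type argument, and only at the very end invokes the element $m=d_2$ to show that $\bar i$ is $2$-regular. The resolution $(*)$ is used sparingly and never to read off the block directly.

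Your plan instead tries to read the block off from $d_2$, and the crucial claim --- that after ``suitable readjustment'' one may write $d_2=\alpha^* w$ with $w$ a \emph{single arrow} $y'\to x$ --- is unjustified and, as stated, circular. From $\alpha\beta\nprec I$ and $\beta\gamma\nprec I$ you can legitimately conclude that $d_2$ has no degree-one part, i.e.\ $d_2\in e_jJ^2e_x$; but nothing forces its leading term to live in degree two. A nonzero class in $e_jJ^2e_x/e_jJ^3e_x$ would have to be a combination of length-two paths $j\to b\to x$, and such a path exists only if $b=a$ already --- which is exactly what you are trying to prove. In general $d_2=\beta p+\bar\beta q$ with $p\in e_xJe_x$ possibly nonzero, and there is no mechanism in your outline that kills $p$ or bounds the length of $q$. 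The invocation of Lemma~\ref{lem:3.3} at this point therefore has no input to act on.

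There are smaller preliminary gaps as well: you do not exclude loops at $x$ and $j$ (the paper does this first, via $p_x=p_j$ and Lemma~\ref{lem:3.1}), and you do not rule out double arrows $j\rightrightarrows x$ (the paper's $a\neq j$ step). Both are needed before one can even speak of ``the'' second arrow $\alpha^*$ out of $j$ landing at a vertex $y'\neq x$.
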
 

\begin{proof} Let $i$ be a $1$-vertex in a triangle as in the statement. Note first that the identity $p_i^-=p_i^+$ 
gives $p_x=p_j$. It follows that there are no loops at $x$ and $j$. Indeed, if there is a loop at $x$, then 
$p_x^-=p_x^+$ implies that $p_i=p_j$, which gives a contradiction with Lemma \ref{lem:3.1}, since $p_j=p_i^+$. 
Similarily, a loop at $j$ gives $p_i=p_x=p_i^-$. \medskip 

Because $\gamma\alpha\nprec I$, we conclude that also $\alpha\beta\nprec I$ and $\beta\gamma\nprec I$, by Lemma 
\ref{lem:3.4} ($\gamma$ and $\alpha$ are the unique arrows between their ends). It follows from Lemma \ref{lem:3.5} 
that $x$ and $j$ are $2$-regular vertices, so $Q$ admits the following subquiver 
$$\xymatrix@R=0.6cm@C=0.55cm{\bar{i}&&i\ar[rd]^\alpha&&i^*\ar[ld]^{\alpha^*} \\ 
a\ar[r]_{\beta^*}&x\ar[lu]^{\bar{\gamma}}\ar[ru]^\gamma&&j\ar[ll]_{\beta}\ar[r]_{\bar{\beta}}&b}$$ 

First, observe that $a\neq j$ (or equivalently, $b\neq x$). If this is not the case, then we get double 
arrows $\beta,\bar{\beta}:j\to x$. If $\alpha\beta\prec I$ or $\alpha\bar{\beta}\prec I$, then by Lemma \ref{lem:3.4}, 
we get $\gamma\alpha\prec I$, which contradicts the assumption, so we obtain that $\alpha\beta\nprec I$ and 
$\alpha\bar{\beta}\nprec I$. But then we have a wild subquiver of type $K_2^*$ given by arrows $\alpha,\beta,\bar{\beta}$. 
Hence indeed, both $a\neq j$ and $b\neq x$. In particular, neither $\alpha\beta$ nor $\alpha\bar{\beta}$ are 
involved in minimal relations of $I$, by Lemma \ref{lem:3.3}. Dually, $\beta\gamma\nprec I$ and $\beta^*\gamma\nprec I$. \medskip

Now we will prove that $i^*=\bar{i}$. Suppose to the contrary that $i^*\neq i$. Then it follows from Lemma \ref{lem:3.3} 
that $\alpha^*\beta\nprec I$ and $\beta\bar{\gamma}\nprec I$. If $\alpha^*\bar{\beta}\nprec I$ or $\beta^*\bar{\gamma}\nprec I$, 
then all paths of length two passing through $j$ or $x$ are not involved in minimal relations, so we obtain a wild 
subquiver of type $\wt{\wt{\bD}}_4$. As a result, we may assume that both $\alpha^*\bar{\beta}\prec I$ and 
$\beta^*\bar{\gamma}\prec I$. In this case, we have two triangles adjacent to the triangle $(\gamma \ \alpha \ \beta)$, 
and therefore, we deduce from Proposition \ref{2triangles} that both $i^*$ and $\bar{i}$ are $2$-regular. But then we 
get a wild subcategory of type $\wt{\wt{\bD}}_6$ in the covering: 
$$\xymatrix@R=0.6cm{&& j \ar[d]_{\beta}&& i^* \ar[d]_{\alpha^*} & \\ 
\circ \ar[r] & \bar{i} & \ar[l]_{\bar{\gamma}} x \ar[r]^{\gamma} & i \ar[r]^{\alpha} & j \ar[r]^{\beta} & x }$$ 
again a contradiction. Note that $\gamma\alpha\beta\nprec I$ and $\beta\gamma\alpha\nprec I$, due to Lemma \ref{lem:3.6} 
(and it dual), since we have no loops at $x$ and $j$. Moreover, we have $\beta\gamma\alpha\beta\nprec I$, by 
\cite[Lemma 4.7]{EHS}, because this path passes through a $1$-vertex $i$. So we proved that $i^*=\bar{i}$. \medskip

Next, observe that $a=b$, since for $a\neq b$, we would have $\beta^*\gamma\alpha,\gamma\alpha\bar{\beta}\nprec I$, 
hence the following wild subcategory in the covering \cite[see also Lemma 4.7]{EHS}. 
$$\xymatrix@R=0.6cm{ && a \ar[d]_{\beta^*} && b & \\ 
b & \ar[l]_{\bar{\beta}} j \ar[r]^{\beta} & x \ar[r]^{\gamma} & i \ar[r]^{\alpha} & j \ar[u]^{\bar{\beta}} \ar[r]^{\beta} & x }$$ 
\medskip 

Consequently, we have $\bar{i}=i^*$ and $a=b$, and therefore, $Q$ has a subquiver $\Gamma$ of the form   
$$\xymatrix@R=0.45cm{ & \ar[ldd]_{\beta^*} a & \\ & i \ar[rd]_{\alpha} & \\ 
x \ar[rd]_{\bar{\gamma}} \ar[ru]_{\gamma} && \ar[ll]^{\beta} j \ar[luu]_{\bar{\beta}} \\ & \bar{i} \ar[ru]_{\alpha^*} & }$$ 

We claim that $\Gamma$ is a block in $Q$ with the unique white vertex $\bar{i}$. Indeed, the vertex $a$ must be $1$-regular, 
because otherwise, one gets a wild subcategory given as follows. 
$$\xymatrix@R=0.6cm{& a & a \ar[d]_{\beta^*} \ar[r] & \circ &\\ 
i \ar[r]^{\alpha} & j \ar[u]^{\bar{\beta}} \ar[r]^{\beta} & x \ar[r]^{\gamma} & i &}$$ 

Applying Lemma \ref{lem:3.3}, we obtain $\beta^*\bar{\gamma}\nprec I$ and $\alpha^*\bar{\beta}\nprec I$. 
Consequently, we must have $\beta\bar{\gamma}\prec I$, since in case $\beta\bar{\gamma}\nprec I$, we obtain the 
following wild subcategory of type $\wt{\wt{\bD}}_4$. 
$$\xymatrix@R=0.5cm{ &i&& \\ 
a \ar[r]^{\beta^*} & x \ar[u]^{\gamma} \ar[d]_{\bar{\gamma}} & \ar[l]_{\beta} j \ar[r]^{\bar{\beta}} & a \\ 
& \bar{i} &&  }$$ 
Therefore, we get $\beta\bar{\gamma}\prec I$, and hence, also $\bar{\gamma}\alpha^*\prec I$, by \ref{lem:3.3}. 
Thus there is a minimal relation of $I$ of the form:  
$$\bar{\gamma}\alpha^*=\gamma\alpha z,$$ 
for some $z\in\Lambda$ (after possibly adjusting $\alpha^*$). Using the exact sequence for $S_i$, we conclude 
that there is an element $m\in e_j\Lambda e_x$ such that $m\gamma=0$. \medskip 

Now, it follows that $\bar{i}$ is $2$-regular. Indeed, otherwise $\alpha^*$ is the unique arrow starting at 
$\bar{i}$, so this yields an element $m\bar{\gamma} \in e_j\Lambda e_{\bar{i}}$ lying in the (right) socle 
of $\Lambda$, because $m\bar{\gamma}\alpha^*=m\gamma\alpha z = 0$. This is impossible for a symmetric algebra. All arrows 
starting or ending at $x,j,i$ are known, hence indeed, the subquiver $\Gamma$ is a block in $Q$. \end{proof} 

Now we are ready to prove Proposition \ref{prop:4.2} extending the above lemma. \bigskip

\begin{proof}{\it (of Proposition \ref{prop:4.2})} Let $i$ be a $1$-vertex in a triangle 
$$\xymatrix@C=0.5cm{&i\ar[rd]^{\alpha}&\\ x\ar[ru]^{\gamma}&&j\ar[ll]_{\beta}} $$ 
Then it is clear from Lemma \ref{lem:3.5} that $x$ and $j$ are $2$-vertices, hence there are arrows $\beta^*: a\to x$ 
and $\bar{\beta}:j\to b$ different from $\beta$ and arrows $\bar{\gamma}:x\to\bar{i}$ and $\alpha^*:i^*\to j$ with 
$\bar{i},i^*\neq i$. \smallskip 

We claim first that $a\neq j$ (equivalently, $b\neq x$). Suppose that this is not the case. Then $a=j$, $b=x$, 
and $Q$ admits a subquiver of the form 
$$\xymatrix@R=0.6cm@C=0.45cm{\bar{i}&&i\ar[rd]^{\alpha}&&i^*\ar[ld]^{\alpha^*} \\ 
&x\ar[lu]^{\bar{\gamma}}\ar[ru]^{\gamma}&&j\ar@<-.5ex>[ll]_{\beta}\ar@<+.5ex>[ll]^{\beta'}&}$$ 
with $\beta'=\beta^*=\bar{\beta}$. Moreover, there is  no wild subcategory, hence $\alpha\beta\prec I$ or 
$\alpha\beta'\prec I$. Without loss of generality, we assume $\alpha\beta\prec I$. Then $\gamma\alpha\prec I$, 
by Lemma \ref{lem:3.4}, since there is a triangle $(\gamma \ \alpha \ \beta)$ and there are no double arrows 
between $x$ and $i$. As a result, we conclude that $i^*\neq\bar{i}$, because otherwise we obtain a subquiver 
forbidden in Lemma \ref{observ}. Now $i^*\neq\bar{i}$ implies that we have no arrows $x\to i^*$, and therefore, 
both $\alpha^*\beta\nprec I$ and $\alpha^*\beta'\nprec I$, due to Lemma \ref{lem:3.3}. But this is impossible, 
since then we get a wild subquiver of type $K_2^*$. \medskip

Consequently, we must have  $a\neq j$ (and we can assume $b\neq x$). Then $Q$ has a subquiver 
of the form $$\xymatrix@R=0.6cm@C=0.55cm{\bar{i}&&i\ar[rd]^\alpha&&i^*\ar[ld]^{\alpha^*} \\ 
a\ar[r]_{\beta^*}&x\ar[lu]^{\bar{\gamma}}\ar[ru]^\gamma&&j\ar[ll]_{\beta}\ar[r]_{\bar{\beta}}&b}$$ 
Obviously, we have $\alpha\bar{\beta}\nprec I$ and $\beta^*\gamma\nprec I$, by Lemma \ref{lem:3.3}. \smallskip 

We will prove that $\bar{i}=i^*$ is a $2$-regular vertex, whereas $a=b$ is $1$-regular. Then of course, $Q$ has the 
required local shape. To do this, we will investigate several idempotent algebras of $\Lambda$. \smallskip 

We can assume that there are no loops at $x$ and $j$. Indeed, we have $p_x=p_i^-=p_i^+=p_j$, because $i$ is a 
$1$-vertex. Moreover, a loop at $x$ or $j$ implies $p_i=p_j$ or $p_i=p_x$, respectively, and in both cases we 
have $\hat{p}_i=p_i$, which contradicts Lemma \ref{lem:3.1}. Note also that $i$ is a $1$-vertex in a triangle, 
hence, we may further assume that $\gamma\alpha\prec I$ (the case $\gamma\alpha\nprec I$ is already covered in 
Lemma \ref{lem:triangleI}). \medskip  

{\bf (1)} First, let us note some properties of relations in $\Lambda$ which will be used further. Since there are no 
double arrows from $j$ to $x$ in $Q$ ($a\neq j$), we deduce from Lemma \ref{lem:3.3} that $\gamma\alpha\prec I$ 
implies $\alpha\beta\prec I$ and $\beta\gamma\prec I$. \smallskip 

(1a) We may assume that $\alpha\beta=\beta\gamma=0$ in 
$\Lambda$. To see this, consider the exact sequence for $S_i$: 
$$0\to \gamma\La \to P_x \to P_j \to \alpha \La \to 0.$$ 
We write down the minimal relation involving $\alpha\beta$, which is of the form
$$\alpha\beta + \alpha z \beta^* = 0,$$
for some $z\in J$, because $\alpha$ is the only arrow from $i$. We may replace $\beta$ by $\beta:= \beta + z\beta^*$ 
(adjusting presentation) and get $\alpha\beta=0$. Then in fact $\beta\La = \Omega^2(S_i)$ and hence 
$\Omega^3(S_i) = \Omega^{-1}(S_i) = \gamma\La$ which gives $\beta \gamma = 0$. \smallskip 

(1b) Further, note that $\gamma\alpha=\bar{\gamma}z\alpha^*$ for some non-zero $z\in\Lambda$. Actually, from 
$\gamma\alpha\prec I$ we get a minimal relation in $\Lambda$ of the form: 
$$\gamma\alpha = \bar{\gamma} z_1\gamma\alpha + \bar{\gamma}z_2\alpha^* + \gamma\alpha z_3\alpha^* \leqno{(*)}$$ 
We may assume $z_3=0$: if not we write as $\gamma\alpha u = \bar{\gamma}z_1\gamma\alpha + \bar{\gamma}z_2\alpha^*$, 
where $u = 1- z_3\alpha^*$ is a unit. We post multiply with $u^{-1} $ and get an expression analogous to (*) but 
with $z_3=0$.

We may also assume that $z_1=0$, otherwise we write $(1-\bar{\gamma}z_1)\gamma\alpha = \bar{\gamma}z_2\alpha^*$. 
We premultiply with the inverse of $1-\bar{\gamma}z_1$ and get
similarly an expression of the form
as stated in (1b) \medskip 

{\bf (2)} Now, let $f=e_x+e_j$ and consider the idempotent algebra $R=e\Lambda e$, where $e=f+e_i$. We claim that 
$R=KQ_R/I_R$ is given by the following quiver: 
$$\xymatrix@R=0.6cm@C=0.45cm{&&i\ar[rd]^{\alpha}&& \\ 
&x\ar[ru]^{\gamma} \ar@/_20pt/[rr]_{\gamma'} &&j\ar@<-.5ex>[ll]_{\beta}\ar@<+.4ex>[ll]^{\beta'}&}$$
Clearly, $Q_R$ contains a triangle induced from the arrows $\gamma,\alpha,\beta$ in $Q$. In particular, we have also 
$\alpha\beta=\beta\gamma=0$ in $R$ and $\gamma\alpha\prec I_R$. The arrow $\beta$ cannot be in the right (or left) 
socle of $R$, and hence $Q_R$ admits at least one arrow $\gamma '\neq \gamma$ starting at $x$ (respectively, arrow 
$\alpha'\neq\alpha$ ending at $j$). Similarly, $\gamma$ and $\alpha$ are not in the socle, so we have additional 
arrows $\beta',\beta''\neq\beta$ starting at $j$ and ending at $x$, respectively. Note that $i$ is still a 
$1$-vertex in $Q_R$, and consequently, all the arrows $\alpha',\gamma',\beta',\beta''$ start and end at vertices 
$x$ or $j$. If $\beta'$ is not a loop, then we have double arrows from $j$ to $x$, and hence, due to tameness of 
$R$, we get that $\gamma',\alpha'$ are not loops. In this case, we must have one arrow $\gamma'=\alpha':x\to j$ and 
$\beta''=\beta'$, and $Q_R$ has required shape. \smallskip 

Therefore, it remains to exlude the case when $\beta'$ is a loop. If this is the case, then $\beta''$ is a loop, 
since otherwise, we get a wild subcategory formed by the double arrows $\beta,\beta'':j\to x$ and a loop $\beta'$. 
It follows that either $\gamma'=\alpha'$ is an arrow $x\to j$ or $\gamma'=\beta''$ and $\alpha'=\beta'$, and then 
$Q_R$ consists of triangle and two loops $\gamma'=\beta''$ at $x$ and $\beta'=\alpha'$ at $j$. In the first case, 
we obtain the following wild subcategory in covering: 
$$\xymatrix{ &&i\ar[d]_{\alpha}&i&\\
x&\ar[l]_{\beta} j \ar[r]^{\beta '}  &  j  & x \ar[u]^{\gamma}\ar[l]_{\gamma'}\ar[r]^{\beta''}& x}$$ 
In the second case, $\gamma\alpha\prec I_R$ implies that $\gamma\alpha= \gamma'z'\beta'$, where $z'$ must have a factor 
$\gamma\alpha$ since this is the only way to get from $x$ to $j$. So we actually have
$$\gamma\alpha = \gamma' z_1\gamma\alpha z_2\beta'$$
In general if $\theta = u\theta v$ with $u, v$ in the radical then it follows that $\theta = 0$, because 
$\theta = u\theta v = u(u\theta v)v = u^r\theta v^r$ and this eventually becomes zero. It follows that $\gamma\alpha=0$, 
but then $\gamma$ is in the socle, a contradiction. Therefore, $Q_R$ is of required form. \medskip 

{\bf (3)} Next, we prove that $\bar{i}=i^*$. Let $\bar{i}\neq i^*$, and consider the idempotent algebras 
$\bar{R}=\bar{e}\Lambda\bar{e}$, $R^*=e^*\Lambda e^*$, and $\hat{R}=\hat{e}\Lambda\hat{e}$, where $\bar{e}=f+e_{\bar{i}}$, 
$e^*=f+e_{i^*}$ and $\hat{e}=f+e_{\bar{i}}+e_{i^*}$. The quiver $Q_{\bar{R}}$ obviously has arrows $\beta:j\to x$ and 
$\bar{\gamma}: x\to \bar{i}$, hence a path from $j$ to $\bar{i}$. Moreover, $\bar{\gamma}$ is the unique arrow in 
$Q_{\bar{R}}$ starting at $x$, due to (1b). Because $\bar{R}$ is symmetric, we must have a path in $Q_{\bar{R}}$ from $x$ to $j$, 
hence an arrow $\delta: \bar{i}\to j$. \smallskip 

(3a) {\it We will show that the Gabriel quiver of $\bar{R}$ is of the form} 
$$\xymatrix@R=0.6cm@C=0.45cm{
&x\ar[rd]_{\bar{\gamma}} &&j\ar@<-.5ex>[ll]_{\beta}\ar@<+.4ex>[ll]^{\beta'}& \\ 
&&\bar{i}\ar[ru]_{\delta}\ar@(dl, dr)[]_{\rho}&& \\ \\ }$$ 

First, observe that there is another arrow $\beta':j\to x$, $\beta'\neq\beta$, in $Q_{\bar{R}}$. Indeed, consider 
the idempotent algebra $\bar{S}=(\bar{e}+e_i)\Lambda(\bar{e}+e_i)$. Its quiver contains arrows 
$\gamma,\alpha,\beta,\delta$ and $\bar{\gamma}$. Moreover, both algebra $R$ and $\bar{R}$ are idempotent algebras of 
$\bar{S}$. We have the second arrow $\beta':j\to x$ in $Q_{R}$, so there must be a corresponding path in $Q_{\bar{S}}$, 
which does not factor through $\beta$. As a result, there is an arrow $\beta'\neq\beta$ in $Q_{\bar{S}}$ starting at 
$j$ and ending at $x$ or $\bar{i}$. In the first case, we get required double arrows in $Q_{\bar{S}}$, hence in 
$Q_{\bar{R}}$, so we have to exclude an arrow $\beta':j\to \bar{i}$ in $Q_{\bar{S}}$. In fact, then we have a path 
$\beta'\gamma'$ in $Q_{\bar{S}}$, $\gamma':\bar{i}\to x$, and the quiver $Q_{\bar{S}}$ contains the following subquiver. 
$$\xymatrix{ & i \ar[rd]^{\alpha}& \\ 
x \ar[ru]^{\gamma} \ar@<-0.4ex>[rd]_{\bar{\gamma}}&& j \ar@<-0.4ex>[ld]_{\beta'} \ar[ll]_{\beta} \\ 
&\bar{i} \ar@<-0.3ex>[ru]_{\delta} \ar@<-0.4ex>[lu]_{\gamma'} & }$$ 
We cannot have a loop $\mu$ at $\bar{i}$, because then we would have the following wild subcategory in covering: 
$$\xymatrix{& j\ar[d]^{\beta'} &x&& \\ 
x \ar[r]^{\bar{\gamma}} & \bar{i} & \ar[l]_{\mu} \bar{i} \ar[u]^{\gamma'} \ar[r]^{\delta} & j & \ar[l]_{\alpha} i }$$ 
Consequently, the above subquiver is the whole $Q_{\bar{S}}$ (no loops at $x,j$, since it would give loops in $Q_R$). 
But then the algebra $R_1=f\Lambda f=f\bar{S}f$ is given by the forbidden quiver 
$$\xymatrix@R=0.6cm@C=0.45cm{&x\ar@/_10pt/[rr] &&j\ar@<-.5ex>[ll]\ar@<+.4ex>[ll]& }$$ 
and we obtain a contradiction with Lemma \ref{lem:2vertices} (note that $\beta'\gamma'\nprec I_{\bar{S}}$, since 
otherwise, there is only one arrow $j\to x$ in $Q_R$). Therefore, we proved that $\beta'$ cannot be an 
arrow $j\to\bar{i}$, and hence, it must be the second arrow $j\to x$ in $Q_{\bar{S}}$, so also in $Q_{\bar{R}}$. \medskip 

In a similar way, one can prove that there is at least one further arrow in $Q_{\bar{R}}$ (besides $\beta,\beta',\bar{\gamma}$ 
and $\delta$). Otherwise consider the idempotent algebra $R_1= fRf$. This is tame symmetric given by the following quiver: 
$$\xymatrix@R=0.6cm@C=0.45cm{
&x\ar@/_20pt/[rr]_{\gamma'} &&j\ar@<-.5ex>[ll]_{\beta}\ar@<+.4ex>[ll]^{\beta'}& }$$
and we obtain a contradiction with Lemma \ref{lem:2vertices} again. \smallskip 

As a result, we must have one more arrow in $Q_{\bar{R}}$. It cannot start at $x$, because all paths in $Q_\Lambda$ 
factor through $\bar{\gamma}$ (modulo $I$), so this is the unique arrow in $Q_{\bar{R}}$ starting at $x$. It cannot 
also start at $j$ (or end at $x$), otherwise this would yield a wild subcategory of $\bar{R}$. This leaves to consider the case 
when the new arrow starts at $\bar{i}$. \smallskip 

Suppose there is an arrow $\delta_1: \bar{i}\to j$. Since $\bar{R}$ is tame, one of $\bar{\gamma}\delta$, 
$\bar{\gamma}\delta_1$ is in some relation, and after adjusting $\delta_1$ (if necessary) we may assume 
$\bar{\gamma}\delta_1=0$ ($\bar{\gamma}$ is the unique arrow in $Q_{\bar{R}}$ starting from $\bar{i}$). 
Then $R_1 = f\bar{R}f$ is of the same form as above, and we get similar contradiction with Lemma \ref{lem:2vertices}. 
\smallskip

This actually proves that there is a loop $\rho$ at vertex $\bar{i}$ of $Q_{\bar{R}}$, and there are no other 
arrows, so $Q_{\bar{R}}$ indeed has the required shape. \medskip 

Using dual arguments, one can show that the idempotent algebra $R^*$ is given by the same quiver as $\bar{R}$:   
$$\xymatrix@R=0.6cm@C=0.45cm{
&x\ar[rd]_{\ve} &&j\ar@<-.5ex>[ll]_{\beta}\ar@<+.4ex>[ll]^{\beta'}& \\ 
&&i^*\ar[ru]_{\alpha^*}\ar@(dl, dr)[]_{\rho'}&& \\ \\ }$$ \medskip 

(3b) {\it We will prove that none of the paths $\bar{\gamma}\delta,\bar{\gamma}\rho$ and $\bar{\gamma}\rho\delta$ 
is involved in minimal relations defining $\bar{R}$.} \ First, suppose $\bar{\gamma}\rho\prec I_{\bar{R}}$. Because 
$\bar{\gamma}$ is the unique arrow in $Q_{\bar{R}}$ starting at $x$, we conclude that $\bar{\gamma}\rho=0$ (after 
possibly adjusting $\rho$). But then the idempotent algebra $R_1=f\bar{R}f$ is given by the quiver 
$$\xymatrix@R=0.6cm@C=0.45cm{
&x\ar@/_20pt/[rr]_{\gamma'} &&j\ar@<-.5ex>[ll]_{\beta}\ar@<+.4ex>[ll]^{\beta'}& }$$ 
which is impossible, due to Lemma \ref{lem:2vertices}. In the same way, one can show that 
$\bar{\gamma}\rho\delta\nprec I_{\bar{R}}$. \smallskip 

Now, let $\bar{\gamma}\delta\prec I_{\bar{R}}$. As above, after adjusting $\delta$, we can assume $\bar{\gamma}\delta=0$. 
We will show that $\rho^2\nprec I_{\bar{R}}$. Suppose to the contrary $\rho^2\prec I_{\bar{R}}$. We may assume that 
$\delta\beta'\nprec I_{\bar{R}}$, because we have double arrows. In this case, the local algebra $e_{\bar{i}}Re_{\bar{i}}$ 
is generated by $\rho$ and $Y=\delta\beta\bar{\gamma}$. Note that $\bar{\gamma}Y=0=Y\delta$, because $\bar{\gamma}\delta=0$, 
hence in particular, we have $Y^2=0$. Now, by our assumption, the radical of this local algebra is spanned by the 
monomials in $\rho,Y$ with $\rho$ and $Y$ alternating. So we can write 
$$\rho^2=c_0\rho Y+c_1\rho Y\rho+c_2\rho Y\rho Y+\dots +d_0Y+d_1Y_\rho+\dots = \rho U_1Y+\rho U_2\rho + YU_3Y + YU_4\rho,$$ 
where $U_2$ is in the radical. Then $\bar{\gamma}\rho^2\delta=\bar{\gamma}\rho U_2\rho\delta$ with each term in 
$J^2_{R_1}$, since $\bar{\gamma}\rho Y \rho \delta=\bar{\gamma}\rho\delta\cdot \beta \cdot \bar{\gamma}\rho\delta$. 
But then the path $\bar{\gamma}\rho^2\delta$ cannot induce an arrow in $Q_{R_1}$. Premultiplyind the above identity 
by $\rho$ and using the same argument, we conclude that $\bar{\gamma}\rho^3\delta$ is not an arrow in $Q_{R_1}$, 
and inductively, we get the same conclusion for any path of the form $\bar{\gamma}\rho^a\delta$. This shows that 
the quiver of $R_1$ has the form forbidden in Lemma \ref{lem:2vertices}. \smallskip 

This shows that $\rho^2$ is not involved in a minimal relation of $I_{\bar{R}}$. Therefore, in a covering of $\bar{R}$ 
we have the following subcategory 
$$\xymatrix{ &&&j\ar[d]_{\beta}&& \\ &&&x\ar[d]_{\bar{\gamma}}&& \\
j\ar[r]^{\beta} & x\ar[r]^{\bar{\gamma}} &\bar{i}&\ar[l]_{\rho}\bar{i}&\ar[l]_{\rho}\bar{i}\ar[r]^{\delta}& j}$$ 
All paths of lenght $2$ are non-zero, since the corresponding paths in $\bar{R}$ are not involved in relations (note 
that $\beta\bar{\gamma}\nprec I$, by Lemma \ref{lem:3.3}, since $i^*\neq\bar{i}$, and $\beta,\bar{\gamma}$ remain arrows 
in $Q_{\bar{R}}$, so also $\beta\bar{\gamma}\nprec I_{\bar{R}}$). Finally, the unique path of length $3$, i.e. 
$\beta\bar{\gamma}\rho$ is also not involved in a minimal relation of $I_{\bar{R}}$. If it was, we would get the 
following minimal relation in $\bar{R}$: 
$$\beta\bar{\gamma}\rho+\beta\bar{\gamma}\rho z=0,$$  
with $z\in J_{\bar{R}}$, because $\bar{\gamma}\delta=0$ and $\beta'\bar{\gamma}\prec I_{\bar{R}}$. Now adjusting 
$\rho:=\rho+\rho z$, we obtain $\beta\bar{\gamma}\rho=0$. Similarily, we have $\beta\bar{\gamma}\delta=0$, and 
consequently, the path $\beta\bar{\gamma}$ belongs to the (right) socle of $\bar{R}$, a contradiction. As a result, 
all paths in the above subquiver are not involved in minimal relations of $I_{\bar{R}}$, and hence, it gives a wild 
hereditary subcategory (in covering). \smallskip 

Summing up, it has been proven above that the paths $\bar{\gamma}\delta,\bar{\gamma}\rho$ and $\bar{\gamma}\rho\delta$ 
are not involved in minimal relations of $I_{\bar{R}}$. Using dual arguments for the second idempotent algebra $R^*$ 
one can show that the paths $\ve\alpha^*,\ve\rho'$ and $\ve\rho'\alpha^*$ are not involved in minimal relations of 
$I_{R^*}$. \medskip 

(3c) {\it We will find other wild subcategories, but this time in a covering of 
$\hat{R}$.}  Note first that the quiver $Q_{\hat{R}}$ of $\hat{R}$ contain arrows $\beta,\bar{\gamma},\alpha^*$ induced 
from arrows in $Q$, and additionally, the second arrow $\beta'$ appearing in all previous algebras $R,\bar{R},R^*$. 
The arrows $\beta,\beta',\bar{\gamma},\alpha^*$ exhaust all arrows in $Q_{\hat{R}}$ starting or ending at $x$ or $j$. 
It follows from symmetricity that there is at least one arrow $\xi:\bar{i}\to i^*$. We know from (3a) and (3a') 
that there are loops $\rho$ and $\eta$ in quivers of $\bar{R}$ and $R^*$, respectively, therefore we must have cyclic 
paths $\bar{i}\rightsquigarrow\bar{i}$ and $i^*\rightsquigarrow i^*$ in $Q_{\hat{R}}$ not containing $\beta$. In 
consequence, $Q_{\hat{R}}$ either: 1) admits an arrow $\xi':i^*\to \bar{i}$, or 2) two loops $\rho$ and $\eta$ at 
$\bar{i}$ and $i^*$. \smallskip 

1) In the first case, the quiver $\hat{Q}=Q_{\hat{R}}$ consists of arrows $\beta,\beta',\bar{\gamma},\alpha^*,\xi,\xi'$, 
and possibly, a second arrow $\zeta:\bar{i}\to i^*$. Indeed, all other arrows are arrows starting and ending at 
$\bar{i}$ or $i^*$, and we cannot have one more arrow $i^*\to \bar{i}$, because we would obtain a wild subquiver of 
type $K_2^-$. Hence, there is at most one additional arrow $\zeta:\bar{i}\to i^*$, and possibly loops at $\bar{i}$ 
or $i^*$. We can choose a basis $\cB$ of $\hat{R}$ consisting of paths in $\hat{Q}$ such that 
\begin{itemize}
\item the set $\bar{\cR}$ of all subpaths of paths in $\cB$ which start and end in $x,j,\bar{i}$ generates $\bar{R}$, and 
\item the set of paths $\bar{\cP}\subset\cR$ which are not products of shorter paths in $\bar{\cR}$ is a set of 
representatives of arrows in $Q_{\bar{R}}$. 
\end{itemize} 

Let now $\vf$ denote the path in $\bar{\cR}$ corresponding to the arrow $\delta$ in $Q_{\bar{R}}$. In case there 
is a loop $\rho:\bar{i}\to\bar{i}$ in $\hat{Q}$, it induces a loop $\rho$ in $Q_{\bar{R}}$, hence the path $\vf$ 
is always starting with $\xi$ or $\zeta$. It follows that there is no loop at $\bar{i}$. Indeed, if 
$\psi:\bar{i}\to\bar{i}$ is a loop in $\hat{Q}$, then we get the following wild subcategory in a covering of $\hat{R}$: 
$$\xymatrix{&&j\ar[d]_{\beta}&&&\\&&x\ar[d]_{\bar{\gamma}}&&& \\ 
j & i^*\ar[l]_{\alpha^*}\ar[r]^{\xi '}&\bar{i}&\bar{i}\ar[l]_{\psi}\ar@{-->}[r]^{\vf}&j&}$$ 
Note that the path $\vf$ is not involved in any relation defining $\hat{R}=K\hat{Q}/\hat{I}$, because otherwise, we 
have $\vf\prec I_{\bar{R}}$, which is impossible, since $\vf$ represents an arrow in $Q_{\bar{R}}$. Dually, using 
the path $\sigma$ in $\hat{Q}$, which corresponds to the arrow $\ve$ in $Q_{R^*}$, one can prove that there are no 
loops in $\hat{Q}$ at $i^*$. \smallskip 

As a result, we proved that $\hat{Q}$ consists of arrows $\beta,\beta',\bar{\gamma},\alpha^*,\xi,\xi'$ and possibly 
$\zeta:\bar{i}\to i^*$. If we have double arrows $\xi,\zeta:\bar{i}\to i^*$, then one of the paths $\xi\alpha^*$ 
or $\zeta\alpha^*$ is involved in a minimal relation of $\hat{I}$, since otherwise, we would get a wild subcategory 
of type $K_2*$. Without loss of generality, assume that $\zeta\alpha^*\prec\hat{I}$, but $\xi\alpha^*\nprec I$. 
Then any path in $\cB$ from $\bar{i}$ to $j$ is of the form $(\xi\xi')^k\alpha^*$, hence independently of $\cB$, 
we will always have $\xi\xi'$ and $\xi\alpha^*$ in $\bar{\cP}$. Similarily, one can take a subset $\cP^*\subset \cB$ 
which gives the arrows of $Q_{R^*}$, and then $\xi'\xi$ and $\bar{\gamma\xi}$ will be always in $\cP^*$. Consequently, 
we may identify the induced arrows $\delta,\rho$ in $Q_{\bar{R}}$ (respectively, arrows $\ve,\rho'$ in $Q_{R^*}$) 
with the following paths in $\hat{Q}$: 
$$\delta=\xi\alpha^*,\quad \rho=\xi\xi',\quad \ve=\bar{\gamma}\xi, \quad\mbox{and}\quad \rho'=\xi'\xi.$$ 
Therefore, using (3b) we conclude that there is a wild (hereditary) subcategory in covering of $\hat{R}$ of the 
following the form: 
$$\xymatrix{&&j&x\ar[d]_{\bar{\gamma}}& \\ 
x\ar[r]^{\bar{\gamma}} &\bar{i}\ar[r]^{\xi}&i^*\ar[r]^{\xi'}\ar[u]^{\alpha^*}&\bar{i}\ar[r]^{\xi}&i^*}$$ \medskip 

2) For the second case, when $\hat{Q}$ admits two loops $\rho,\rho'$, we clearly have no more arrows in $\hat{Q}$, 
since adding any other arrow makes the algebra wild. In this case, we may fix a basis of $\hat{R}$ containing 
paths $\bar{\gamma}\xi$ and $\xi\alpha^*$, which after passing to idempotent algebras induce arrows $\epsilon$ in 
$Q_{R^*}$ and $\delta$ in $Q_{\bar{R}}$. Further, this basis cannot contain longer paths of the form $\bar{\gamma}\rho^p\xi$ 
or $\xi(\rho')^p\alpha^*$, for $p\geqslant 1$, because then we would get more than one arrow $x\to i^*$ in $Q_{R^*}$ 
or more than one arrow $\bar{i}\to j$ in $Q_{\bar{R}}$. As a result, the algebra $R_1=f\hat{R}f$ is given by the following 
quiver 
$$\xymatrix@R=0.6cm@C=0.45cm{&x\ar@/_10pt/[rr] &&j\ar@<-.5ex>[ll]\ar@<+.4ex>[ll]& }$$ 
where the unique arrow $x\to j$ is identified with the path $\bar{\gamma}\xi\alpha^*$ in $\hat{Q}$. This contradicts 
Lemma \ref{lem:2vertices}. \medskip 

Consequently, we have proven that indeed $\bar{i}=i^*$. In the remaining two steps we will show that this vertex is 
$2$-regular (in $Q$) and that $a=b$ is $1$-regular. \bigskip 

{\bf (4)} We show that $\bar{i}$ is a $2$-vertex in $Q$:  Indeed, if $\bar{i}$ was a $1$-regular vertex, we 
would get $\gamma\alpha$ involved in a minimal relation of the form 
$$\gamma\alpha=\bar{\gamma}\alpha^*\zeta,$$  
for some $\zeta$ is in $e_j\Lambda e_j$. Now, consider the exact sequence for $S_{\bar{i}}$:  
$$0\to S_{\bar{i}}\to 
\xymatrix{P_{\bar{i}} \ar[r]^{\bar{\gamma}} & P_x \ar[r]^{d_2} & P_j \ar[r]^{\alpha^*} & P_{\bar{i}}}\to S_{\bar{i}} \to 0.$$ 
The middle map $d_2$ is given by an element $\bar{m}\in e_j J e_x$ satisfying $\bar{m}\bar{\gamma}=0$. But then, 
we obtain $\bar{m}\gamma\alpha=\bar{m}\bar{\gamma}\alpha^*\zeta=0$. Because $\alpha$ is the unique arrow starting at 
$i$, we conclude that $\bar{m}\gamma$ is a socle element in $e_j\Lambda e_i$, a contradiction. Therefore, $i^*=\bar{i}$ 
is indeed a $2$-regular vertex in $Q$. 

\bigskip 

{\bf (5)} We will show that $a=b$. Assume that this is not the case, and take the 
idempotent algebra $\wt{R}=\wt{e}\Lambda\wt{e}$, given by $\wt{e}=e+e_{\bar{i}}+e_a+e_b$. \smallskip 

We will prove first that its Gabriel quiver $\wt{Q}=Q_{\wt{R}}$ admits a loop at vertex $\bar{i}$. Indeed, 
as in the proof of Lemma \ref{lem:4.1}, we may fix a basis $\cB$ of $\Lambda$ such that the set $\cR$ of 
subpaths of elements in $\cB$ with source and target in $x,j,i,\bar{i},a,b$ generates $\wt{R}$ and the 
associated subset $\cP\subset\cR$ of paths, which are not products of shorter paths in $\cR$, represent 
the set of arrows of $\wt{Q}$. \smallskip 

Now, consider the same construction for an idempotent algebra $\bar{R}=\bar{e}\tilde{R}\bar{e}=\bar{e}\La\bar{e}$. 
As above, we may pick a basis $\wt{\cB}$ of $\wt{R}$ (consisting of paths in $\wt{Q}$) such that the set $\wt{\cR}$ 
of subpaths of elements in $\wt{\cB}$ with source and target in $x,j,\bar{i}$ generates $\bar{R}$ and the 
associated subset $\wt{\cP}\subset\wt{\cR}$ of paths, which are not products of shorter paths in $\wt{\cR}$, 
determine the arrows of $Q_{\bar{R}}$. \smallskip 

Recall that the Gabriel quiver $Q_{\bar{R}}$ is known by (3a), we showed that it consists of arrows 
$\beta,\beta',\bar{\gamma},\alpha^*$ and a loop $\rho$. The first three arrows are arrows in both $Q$ and $\wt{Q}$, 
and the remaining two correspond to paths in $\wt{\cP}$. 

The arrow $\beta':j\to x$ is induced from a path in $\wt{Q}$ from $j$ to $x$, which does not pass through vertices 
$x,j,\bar{i}$. It cannot pass through $i$, since it would then must have passed through $x$ earlier, so it can only 
pass through $a$ and $b$. It follows, that there is an arrow $\zeta:b\to a$ in $\wt{Q}$ and $\beta'$ is represented 
by a path $\bar{\beta}u\beta^*$ with $u$ a path of the form 
$$\xymatrix{b\ar[r]^{\zeta} & a \ar[r] & b \ar[r] & \dots \ar[r] & a},$$ 
where we do not indicate possible loops at $a$ or $b$, which may appear in $u$. 

Further, we have a loop $\rho:\bar{i}\to\bar{i}$ in $\bar{R}$, which corresponds to a path from $\bar{i}$ to 
$\bar{i}$ in $\wt{Q}$, passing only through vertices $a,b$. Consequently, $\rho$ is represented by 
one of the following paths in $\wt{Q}$: 
\begin{enumerate}
\item[a)] $\rho=\sigma v\eta$, for some arrows $\sigma:\bar{i}\to b,\eta:a\to\bar{i}$ and a path 
$v:b\to a \to b\to \dots \to a$, 
\item[b)] $\rho=\sigma w\eta$, for some arrows $\sigma:\bar{i}\to a,\eta:b\to\bar{i}$ and a path 
$w:a\to b \to a \to \dots \to b$, or 
\item[c)] $\rho$ is a loop in $\wt{Q}$.  
\end{enumerate} 

We will exclude a) and b)

{\it Case a)} \ Then 
the subquiver of $\wt{Q}$ formed by arrows $\alpha,\alpha^*,\bar{\beta}$ and $\sigma$ gives rise 
to a wild subcategory if $\alpha^*\bar{\beta}\nprec I_{\wt{R}}$, so we have $\alpha^*\bar{\beta}\prec I_{\wt{R}}$. 
Dually, using subquiver formed by $\gamma,\bar{\gamma},\beta^*$ and $\eta$, we deduce that 
$\beta^*\bar{\gamma}\prec I_{\wt{R}}$. Because $\alpha^*,\beta^*,\bar{\beta}$ and $\bar{\gamma}$ are arrows 
of $Q$, so these paths are also involved in minimal relations of $I$. Consequently, due to Lemma \ref{lem:3.3}, 
we obtain two more arrows $\sigma^*:b\to\bar{i}$ and $\eta^*:\bar{i}\to a$ in $Q$, and hence, also in $\wt{Q}$. 
But then, we get the following wild subcategory of type $\wt{\wt{\bE}}_6$ (in a covering of $\wt{R}$): 
$$\xymatrix{ && i &&& \\ && x \ar[d]_{\bar{\gamma}} \ar[u]^{\gamma} &&& \\ 
x & \ar[l]_{\beta^*} a \ar[r]^{\eta} & \bar{i} & \ar[l]_{\sigma^*} b \ar[r]^{\zeta} & a & \ar[l]_{\eta^*} \bar{i} }$$ \smallskip 

{\it Case  b)} \ Then $\wt{Q}$ admits at least one arrow $\zeta':a\to b$. It follows from relations 
in (1b) that $\gamma\alpha=\bar{\gamma}z\alpha^*$, so we have $e_jJ_{\wt{R}}^2=\bar{\gamma}J_{\wt{R}}$. Note 
that we have no more arrows in $\wt{Q}$ between $a$ and $b$, because otherwise, we get a wild subquiver of type 
$K_2^\pm$. Moreover, there are no loops at $a$ and $b$, since a loop at $a$ or $b$ easily gives a wild hereditary 
subcategory of type $\wt{\wt{\bE}}_6$ with three arrows meeting at $a$ or $b$. 

Hence $\wt{Q}$ consists of the seven arrows $\gamma,\alpha,\beta,\bar{\gamma},\alpha^*,\bar{\beta},\beta^*$ 
from $Q$ and $\sigma,\eta,\zeta,\zeta'$. Observe also that the path $\rho$ in $\wt{Q}$ corresponds to a loop  
$\rho$ in $Q_{\bar{R}}$, and it has the form $\rho=\sigma\zeta'\zeta\dots \zeta'\eta$, hence we infer that both 
$\sigma\zeta'\nprec I_{\wt{R}}$ and $\zeta'\eta\nprec I_{\wt{R}}$. It follows that $\beta^*\bar{\gamma}\prec I_{\wt{R}}$, 
since otherwise, we obtain the following wild subcategory 
$$\xymatrix@R=0.3cm@C=0.4cm{a \ar[rr]^{\zeta'} \ar[dd]_{\beta^*} && b \ar@/^20pt/[lddd]_{\eta}\\ &i& \\ 
x\ar[rd]_{\bar{\gamma}} \ar[ru]^{\gamma} && \\ & \bar{i} & }$$ 

Similarily, using $\sigma\zeta'\nprec I_{\wt{R}}$, we deduce that $\alpha^*\bar{\beta}\prec I_{\wt{R}}$. But 
$\alpha^*,\beta^*,\bar{\gamma}$ and $\bar{\beta}$ are arrows in $Q$, so in fact, both paths $\beta^*\bar{\gamma}$ 
and $\alpha^*\bar{\beta}$ are involved in minimal relations of $I$. Consequently, it follows from Lemma \ref{lem:3.3} 
that $\sigma$ and $\eta$ are actually arrows in $Q$, hence we conclude that also $\bar{\gamma}\sigma\prec I$, due 
to the Triangle Lemma. Now, because any path from $x$ to $a$ factors through $\bar{\gamma}$, we obtain a relation in $I_{\wt{R}}$ 
of the form $$\bar{\gamma}\sigma+\bar{\gamma}z=0$$ 
with $z\in J^{2}_{\wt{R}}$. But then, substituting $\sigma:=\sigma+z$, we get a new presentation with $\bar{\gamma}\sigma=0$. 

Now, we can similarily show that $\bar{\gamma}\alpha^*\prec I_{\wt{R}}$ implies $\bar{\gamma}\alpha^*=0$ (possibly 
adjusting $\alpha^*$), and then we get an arrow $\bar{\gamma}$ in the socle of $\wt{R}$. Therefore, we must have 
$\bar{\gamma}\alpha^*\nprec I_{\wt{R}}$. 

As a result, we conclude that $\bar{\gamma}\alpha^*$ is the unique path in $\cB$ which does not factor 
through $x,j$. But then, the associated idempotent algebra $R_1=f\wt{R}f$ is given by the following quiver 
$$\xymatrix@R=0.6cm@C=0.45cm{&x\ar@/_10pt/[rr] &&j\ar@<-.5ex>[ll]\ar@<+.4ex>[ll]& }$$ 
where the unique arrow $x\to j$ represents the path $\bar{\gamma}\alpha^*$, so we obtain a 
contradiction with Lemma \ref{lem:2vertices}. This finally proves that cases a) and b) cannot happen, and 
we are left with c). This completes the proof of (5).

\medskip

It follows that $\wt{Q}$ has a subquiver of the form: 
$$\xymatrix@R=0.2cm{a \ar[dd]_{\beta^*} & & b\ar[ll]_{\zeta} \\ & i\ar[rd] & \\ 
x \ar[ru]^{\gamma} \ar[rd]_{\bar{\gamma}}  && j \ar[ll]_{\beta} \ar[uu]^{\bar{\beta}} \\ 
&\bar{i} \ar[ru]_{\alpha^*} \ar@(dl,dr)[]_{\rho} & \\ && }$$ 
where $\gamma,\alpha,\beta,\bar{\gamma},\bar{\beta},\alpha^*,\beta^*$ are all arrows in $\wt{Q}$ that start or end 
at vertices $x,i,j$. \medskip 

{\bf (6)} We will investigate arrows starting or ending with $a$ or $b$. \smallskip 

Observe first that there is no arrow $\tau:a\to\bar{i}$ in $\wt{Q}$, because otherwise, we would get a wild subcategory 
of the form 
$$\xymatrix@R=0.6cm{&&i&&& \\ && x \ar[d]_{\bar{\gamma}} \ar[u]^{\gamma} &&& \\ 
x & a \ar[l]_{\beta^*} \ar[r]^{\tau} & \bar{i} & \bar{i} \ar[l]_{\rho} \ar[r]^{\alpha^*} & j & i \ar[l]_{\alpha}}$$ 
Using dual arguments, one can similarily prove that there is no arrow $\bar{i}\to b$ in $\wt{Q}$. \medskip 

{\bf (7)} We will prove that  $a$ and  $b$ cannot both be  $1$-regular  in $\wt{Q}$. Assume for a contradiction that  
both $a,b$ are $1$-regular vertices of $\wt{Q}$. First, using exact sequences for periodic simple modules 
$S_x,S_j$ and $S_i$ in $\mod\La$, we obtain the following equalities of dimension vectors 
$$p_b+p_x=p_j^+=p_j^-=p_i+p_{\bar{i}}=p_x^+=p_x^-=p_a+p_j \ \mbox{ and } \ p_x=p_j.$$ 
As a result, we get $p_a=p_b$, and hence, also dimension vectors of corresponding projective 
$\wt{R}$-modules are equal: $[e_a\wt{R}]=[e_b\wt{R}]$. Now, because $\zeta$ is the uniuqe arrow 
in $\wt{Q}$ starting at $b$, we have $\zeta\wt{R}\simeq\Omega_{\wt{R}}(S_b)$, so 
$[\zeta\wt{R}]=[e_b\wt{R}]-s_b$. On the other hand, $\zeta$ is the unique arrow in $\wt{Q}$ ending at 
$a$, so $\zeta\wt{R}=\Omega_{\wt{R}}^{-1}(S_a)$ and it has dimension vector $[\zeta\wt{R}]=[e_a\wt{R}]-s_a$. 
Involving the previous euality, we get $s_a=s_b$, a contradiction. This proves
the claim. \medskip 

{\bf (8)} We will show now that this leads to a contradiction in case $a$ is not a $1$-vertex. The arguments in case when 
$b$ is not $1$-regular are dual. \smallskip 

Suppose first that $|a^-|\geqslant 2$, and let $\eta:y\to a$ be the second arrow $\eta\neq\zeta$ ending at $a$. 
If $y=\bar{i}$, then we obtain the following wild subcategory in the covering of $\wt{R}$. 
$$\xymatrix{&&i\ar[d]_{\alpha}&&& \\ &&j&&& \\ 
b\ar[r]^{\zeta}& a & \bar{i} \ar[l]_{\eta} \ar[u]^{\alpha^*} \ar[r]^{\rho} & \bar{i}  & 
\ar[l]_{\bar{\gamma}} x \ar[r]^{\gamma} & i}$$
Hence, we may assume that $\eta$ starts at $b$ or $a$. In particular, there are no arrows $\bar{i}\to a$, so we 
conclude as above that $\beta^*\bar{\gamma}\nprec\wt{I}$. Dually, we can prove that there are no arrows 
$b\to\bar{i}$, so $\alpha^*\bar{\beta}\nprec \wt{I}$. \smallskip 

If $\eta$ is a loop $\eta:a\to a$, then we get the following wild subcategory 
$$\xymatrix{ &&i&&&i\ar[d]_{\alpha}& \\ 
a & a \ar[l]_{\eta} \ar[r]^{\beta^*} & x\ar[u]^{\gamma} \ar[r]^{\bar{\gamma}} & \bar{ i} & 
\bar{i} \ar[l]_{\rho} \ar[r]^{\alpha^*} & j\ar[r]^{\bar{\beta}}& b}$$ 
Therefore, we can assume that $\eta$ is an arrow $b\to a$, and we have double arrows from $\zeta,\eta:b\to a$. 
Moreover, if there is an arrow $\sigma:a\to z$ different from $\beta^*$, then replacing $\eta$ by $\sigma$, we 
get a wild subcategory of the same type. Hence, one may assume that $\beta^*$ is the unique arrow in $\wt{Q}$ 
starting at $b$. In this case, one of $\zeta\beta^*$ or $\eta\beta^*$ is involved in a minimal relation of $\wt{I}$, 
since otherwise, we get a wild subquiver of type $K_2^*$. Since $\beta^*$ is the unique arrow in $\wt{Q}$ starting 
at $a$, we conclude that exactly one, say $\zeta\beta^*\prec\wt{I}$ and $\eta\beta^*\nprec \wt{I}$. Then we have 
also $\eta\beta^*\bar{\gamma},\eta\beta^*\gamma\nprec \wt{I}$. Indeed, if 
$\eta\beta^*\bar{\gamma}\prec I$, then it is involved in a minimal relation of the form 
$$\eta\beta^*\bar{\gamma}=\eta\beta^*\bar{\gamma}\Sigma$$ 
with $\Sigma\in J_{\wt{R}}$. Hence, by repeated substitution, we get $\eta\beta^*\bar{\gamma}=0$. It follows that 
either $\eta\beta^*\gamma=0$, and then $\eta\beta^*$ is in the socle, or $\eta\beta^*\gamma\neq 0$ is in the socle 
of $\wt{R}$. Now, suppose that $\eta\beta^*\gamma\prec I$. Then adjusting $\gamma$, we get $\eta\beta^*\gamma=0$, 
and in this case $\beta^*\gamma\neq 0$ is in the socle of $\wt{R}$ (we have $\zeta\beta^*\gamma=0$, because 
$\zeta\beta^*$ is involved in a minimal relation of the form $\zeta\beta^*=z_1\eta\beta^*+z_2\beta$). \smallskip 

As a result, we obtain the following wild subcategory in a  covering: 
$$\xymatrix{ && i &&& i\ar[d]_{\alpha} & \\ 
b \ar[r]^{\eta} & a \ar[r]^{\beta^*} & x \ar[r]^{\bar{\gamma}}\ar[u]^{\gamma} & \bar{i} & 
\ar[l]_{\rho} \bar{i} \ar[r]^{\alpha^*} & j \ar[r]^{\bar{\beta}} & b }$$ 
\medskip 

Finally, we are left with the case when $a$ is not $1$-regular in $\wt{Q}$, but $|a^-|=1$. Then $\zeta$ is the 
unique arrow in $\wt{Q}$ ending at $a$ and there is an arrow $a\to y$ different from $\beta^*$. In particular, 
it follows that $\beta^*\bar{\gamma}\nprec\wt{I}$ (no arrows $\bar{i}\to a$), and we obtain the same wild subcategory 
as in the case of a loop $\eta:a\to a$, but we use the second arrow $a\to y$ (different from $\beta^*$), instead of 
the resolved loop $\eta$. \medskip  

This eventually shows that $a=b$. \bigskip

{\bf (9)} Finally, it remains to prove that $a=b$ is a $1$-vertex in $Q$. Suppose that $a$ is a $2$-vertex with 
$a^+=\{\beta^*,\eta\}$ and $a^-=\{\bar{\beta},\sigma\}$. Let first both $\alpha^*\bar{\beta}\prec I$ and 
$\beta^*\bar{\gamma}\prec I$. It follows from Lemma \ref{lem:3.3} that then we have two arrows $\eta:a\to \bar{i}$ 
and $\sigma:\bar{i} \to a $, and hence, the quiver $Q$ has a form forbidden in Corollary \ref{exept.5}. \medskip   

Consequently, one of $\alpha^*\bar{\beta}$ or $\beta^*\bar{\gamma}$ is not involved in a minimal relation. In fact, 
we can assume that both $\alpha^*\bar{\beta},\beta^*\bar{\gamma}\nprec I$, because if $\alpha^*\bar{\beta}\nprec I$ 
but and $\beta^*\bar{\gamma}\prec I$, then by Lemma \ref{lem:3.3}, we conclude that there is an arrow $\sigma:\bar{i}\to a $, 
and hence, the following wild subactegory in covering. 
$$\xymatrix{ & a & \\ & i \ar[r]^{\alpha} & j \ar[lu]_{\bar{\beta}} \\ & \ar@/^20pt/[uu]^{\sigma} \bar{i} \ar[ru]^{\alpha^*} &}$$ 
\smallskip

Thus, we have both $\alpha^*\bar{\beta}\nprec I$ and $\beta^*\bar{\gamma}\nprec I$. Applying Lemma \ref{lem:3.4}, 
we deduce that also $\sigma\beta^*\nprec I$ and $\bar{\beta}\eta\nprec I$, because $i$ is a $1$-vertex. On the 
other hand, there are no arrows $x\to j$ in $Q$, so using Lemma \ref{lem:3.3} again, we get $\bar{\beta}\beta^*\nprec I$, 
and therefore, there is at most one path of length $2$ passing through $a$, which is involved in a minimal relation 
of $I$ (and it is $\sigma\eta$). \smallskip 

As a result, we obtain the following wild one-relation algebra in covering (see RiII): 
$$\xymatrix{ & \circ \ar[rd]^{\sigma} & & \circ && \\ 
x & \ar[l]_{\beta} j \ar[r]^{\bar{\beta}} & a \ar[r]^{\beta^*} \ar[ru]^{\eta} & x & \ar[l]_{\beta} j \ar[r]^{\bar{\beta}} & a } $$ 

\end{proof} \bigskip 

\section{Proof of the main theorem}\label{sec:5}

{\bf (1)} We start by setting the scene.
Assume $i\in Q_0$ is a $1$-vertex, i.e. $Q$ has a subquiver 
$$\xymatrix{x\ar[r]^{\gamma}&i\ar[r]^{\alpha} & j}$$ 

Obviously, if $x=j$ then $i$ belongs to a block $B$ of type V1 and $Q$ is glueing of $B$ with the rest part of the quiver, so the 
claim holds in this case. We assume further that $x\neq j$ and we will show that then $i$ belongs to a block of type V2. \medskip 

It is clear from Lemma \ref{lem:3.5} that then $x$ and $j$ are $2$-regular vertices of $Q$, and hence $Q$ has the following 
subquiver 
$$\xymatrix{
\bar{i} &\ar[l]_{\bar{\gamma}} x \ar[r]^{\gamma} & i \ar[r]^{\alpha} & j \ar[d]_{\nu}\ar[rd]^{\bar{\nu}} & i^*\ar[l]_{\alpha^*} \\ 
a\ar[ru]^{\delta}&b\ar[u]_{\delta^*}&&c&d}\leqno(*)$$ \smallskip 

If $\gamma\alpha\prec I$, then $i$ belongs to a triangle, by Lemma \ref{lem:3.3}, hence it follows from Proposition 
\ref{prop:4.2} that $i$ belongs to a block of required shape (the quiver in \ref{prop:4.2} is a glueing of a block of 
type V2 containing $i$ with a triangle). Therefore, we may assume that $i$ is not a part of a triangle in $Q$. In other words, 
vertices $a$ and $b$ are different from $j$, or equivalently, vertices $c,d$ are different from $x$; in particular, by Lemma 
\ref{lem:3.3}, we get $\gamma\alpha\nprec I$ and both $\alpha\nu\nprec I$ and $\alpha\bar{\nu}\nprec I$. Similarly, 
$\delta\gamma\nprec I$ and $\delta^*\gamma\nprec I$. Moreover, we have $a\neq b$ and $c\neq d$, since otherwise, we 
would get a wild subquiver of type $K_2^*$. \smallskip 

{\bf (2)} We will show that $\{a,b\}\cap\{c,d\}$ is not empty.
Suppose to the contrary that $\{a,b\}\cap\{c,d\}$ is empty. Then 
it follows from Lemma \ref{lem:3.6} that for every path $\pi$ of length $3$ in $(*)$, we have $\pi\nprec I$. Moreover, 
applying \cite[Lemma 4.7]{EHS}, we conclude that also all paths of length $4$ in $(*)$ are not involved in minimal relations of $I$, 
since they all pass through a $1$-vertex $i$. Therefore, we have a tame hereditary subcategory $\mathcal{C}$ of type 
$\widetilde{\bD}_6$, given by vertices $x,i,j,a,b,c,d$ and arrows $\alpha,\gamma,\delta,\delta^*,\nu$ and $\bar{\nu}$. \smallskip

Now, it follows that both $\bar{i}$ and $i^*$ are $2$-regular. To prove this, suppose one of them, say $\bar{i}$, is a $1$-regular 
vertex. We claim that then both $\delta\bar{\gamma}\nprec I$ and $\delta^*\bar{\gamma}\nprec I$. Indeed, if one of the paths is 
involved in a minimal relation, then by Lemma \ref{lem:3.3}, we obtain a triangle 
$\xymatrix@C=0.4cm{\bullet \ar[r]& x\ar[r]^{\bar{\gamma}} & \bar{i}\ar[r] & \bullet}$, where $\bullet=a$ or $b$, so $a$ or $b$ 
is a $2$-vertex, due to Lemma \ref{lem:3.5} (a triangle with $1$-vertex $\bar{i}$). But then there is an arrow $a\to a'$ 
different from $\delta$ or an arrow $b\to b'$ different from $\delta^*$, which gives a wild hereditary subcategory of 
type $\wt{\wt{\bD}}_6$ (obtained from $\mathcal{C}$ by adding the new arrow together with a vertex $a'$ or $b'$). This 
shows that both $\delta\bar{\gamma}\nprec I$ and $\delta^*\bar{\gamma}\nprec I$, which yields another contradiction, since then 
we get a wild hereditary subcategory of type $\wt{\wt{\bD}}_4$, given as follows 
$$\xymatrix@R=0.4cm{&\bar{i}&& \\ 
a\ar[r]^{\delta}&x\ar[r]^{\gamma}\ar[u]^{\bar{\gamma}}&i\ar[r]^{\alpha}& j\\ &b\ar[u]^{\delta^*}&& }$$ 
As a result, it has been proved that $\bar{i}$ is $2$-regular, and hence we have an arrow $i'\to\bar{i}$ in $Q$. Using dual 
arguments, one can prove that $i^*$ is $2$-regular, so there is an arrow $i^*\to i''$. One can also assume that all $a,b,c,d$ 
are $1$-regular, since otherwise, as above $\mathcal{C}$ can be extended to a wild hereditary subcategory of type 
$\wt{\wt{\bD}}_6$. So $a,b,c,d$ are $1$-vertices. \smallskip 

Moreover, using an  analogous argument with $\wt{\wt{\bD}}_4$, we deduce that $\delta\bar{\gamma}\prec I$ or 
$\delta^*\bar{\gamma}\prec I$ and $\alpha^*\nu\prec I$ or $\alpha^*\bar{\nu}\prec I$. Actually, we may assume that all of the 
paths $\delta\bar{\gamma}$, $\delta^*\bar{\gamma}$, $\alpha^*\nu$ and $\alpha^*\bar{\nu}$ are involved in minimal relations. 
Indeed, if one of them, say $\delta\bar{\gamma}\nprec I$, then $\delta^*\bar{\gamma}\prec I$ and we obtain the following 
subcategory 
$$\xymatrix@R=0.3cm{i'\ar[r]&\bar{i} & a\ar[d]^{\delta}&&&& \\ 
&&x\ar[lu]_{\bar{\gamma}}\ar[r]_{\gamma}&i\ar[r]_{\alpha}&j&i^*\ar[l]^{\alpha^*}\ar[r]&i'' \\ 
&b\ar[ru]_{\delta^*}& &&&& }$$
being a wild one-relation algebra RiVI. \smallskip 

Concluding, $Q$ has a subquiver of the following form 
$$\xymatrix@R=0.7cm{&a\ar[rd]^{\delta}&&&&c\ar[d]^{}&\\ 
i'\ar[r]&\bar{i}\ar[u]^{\sigma}\ar[d]_{\bar{\sigma}}&x\ar[l]^{\bar{\gamma}} 
\ar[r]_{\gamma}&i\ar[r]_{\alpha}&j\ar[ru]^{\nu}\ar[rd]_{\bar{\nu}}&i^*\ar[l]^{\alpha^*}\ar[r]&i'' \\ 
&b\ar[ru]_{\delta^*}&&&&d\ar[u]&}$$ 
Now, applying Proposition \ref{prop:4.2} to a $1$-vertex $a$ in a triangle, we see that one of the successors of $\bar{i}$ 
must be a $2$-vertex, which gives a contradiction, since here both $a,b$ are $1$-regular. Therefore, it has been proven 
that $\{a,b\}$ and $\{c,d\}$ share at least one vertex. Without loss of generality, we assume that $a=c$. \bigskip 

{\bf (3) } The next step is to show  that then $b\neq d$: 
 Assume that $b=d$. We will show that then one of $a$ or $b$ must be a $1$-regular vertex, and then it would 
follow that $i$ belongs to a block of type V2, as required. Suppose to the contrary that both $a$ and $b$ are 
$2$-vertices. Then there are arrows $\eta:y\to a, \eta':a\to y'$ and $\sigma:z\to b,\sigma':b\to z'$ 
with $\eta\neq\nu$, $\eta'\neq\delta$, $\sigma\neq\bar{\nu}$ and $\sigma '\neq \delta ^*$. Now it is easy to see 
that one of the paths $\eta\delta$, $\nu\eta'$, $\sigma\delta^*$ or $\bar{\nu}\sigma'$ is not involved in a minimal 
relation. Indeed, if all the paths are involved in minimal relations, then it follows from Lemma \ref{lem:3.3} that 
$y=z=\bar{i}$ ($i=y$ or $z$ implies $j=a$ or $b$) and similarly $y'=z'=i^*$, and in this case $\Lambda$ admits the 
following wild subcategory 
$$\xymatrix{i&\ar[l]^{\gamma} x & \ar[l]^{\delta}\ar[d]^{\eta'} a \\ 
& b\ar[u]^{\delta^*} \ar[r]^{\sigma'}& i^* }$$
Hence one of the paths, say $\eta\delta$, is not involved in a minimal relation of $I$. But then $\Lambda$ admits 
another subcategory of the same shape: 
$$\xymatrix{y\ar[r]^{\eta}& a\ar[d]_{\delta}  & \ar[l]_{\nu}\ar[d]^{\bar{\nu}} j \\ 
& x &\ar[l]^{\delta^*} b }$$ 
but not necessarily wild. Clearly, $\Lambda$ is wild if both $\nu\delta\nprec I$ and $\bar{\nu}\delta^*\nprec I$. 
We shall show that this is the case. In fact, if $\nu\delta\prec I$, then by Lemma \ref{lem:3.3}, there is an arrow 
$\bar{\gamma}=\alpha^*:x\to j$, and hence $j=\bar{i}$ and $x=i^*$. Further, we conclude from Lemmas \ref{lem:3.3} 
and \ref{lem:3.6} that $\bar{\nu}\sigma'\nprec I$ and $\alpha\bar{\nu}\sigma '\nprec I$, because $a\neq b$ and 
$i$ is a $1$-vertex not lying in a triangle. Using dual arguments, we see that also $\sigma\delta^*\nprec I$ and 
$\sigma\delta^*\gamma\nprec I$. In this case, we must have $\sigma\sigma'\prec I$, because otherwise we get the 
following wild subcategory of type $\wt{\wt{\bD}}$: 
$$\xymatrix{&&i\ar[d]_{\alpha}&z\ar[d]_{\sigma}& \\ 
y\ar[r]^{\eta}&a&j\ar[l]_{\nu}\ar[r]^{\bar{\nu}}&b\ar[r]^{\sigma'}& z'}$$ 
As a result, vertices $z$ and $z'$ lie in one triangle, and hence, both cannot be $1$-regular, by Lemma \ref{lem:3.5}. 
Without loss of generality, let $z$ be a $2$-vertex. In a similar way, we can prove that $\eta\eta'\prec I$, so 
one of $y$ or $y'$ is a $2$-vertex, and moreover, $\eta\delta\gamma\nprec I$. If $y$ is a $2$-vertex then $\Lambda$ 
admits the following subcategory 
$$\xymatrix{\circ&z\ar[l]\ar[rd]^{\sigma}&&i&&& \\
&z'&b\ar[l]_{\sigma'}\ar[r]^{\delta^*}&x\ar[u]^{\gamma}& a\ar[l]_{\delta}&\ar[l]_{\eta} y\ar[r] & \circ} $$ 
which is a wild one-relation algebra RiIX. If $y'$ is a $2$-vertex we replace $\xymatrix{a&\ar[l]_{\eta} y\ar[r] & \circ}$ 
by $\xymatrix{a\ar[r]_{\eta'}& y' & \ar[l]\circ}$, which gives a wild subcategory of the same type. \medskip 

{\bf (4)} 
Suppose $a=c$, but $b\neq d$. In this case, we conclude from Lemma \ref{lem:3.6} that 
$\delta^*\gamma\alpha\nprec I$ and $\gamma\alpha\bar{\nu}\nprec I$. We can also assume that 
$i^*\neq \bar{i}$, since for $i^*=\bar{i}$, we would get the following wild subcategory of $\Lambda$: 
$$\xymatrix{&&i\ar[rd]^{\alpha}& \\ b\ar[r]_{\delta^*} &x\ar[ru]^{\gamma}\ar[r]_{\bar{\gamma}}&i^*\ar[r]_{\alpha^*}&j}$$
Note: $\bar{\gamma}\alpha^*\nprec I$, because $i$ is not a part of a triangle, and $\delta^*\bar{\gamma}\alpha^*\nprec I$, 
again by Lemma \ref{lem:3.6}, since we have no arrows $j\to b$. In particular, $i^*\neq\bar{i}$ implies also $\alpha^*\nu\delta\nprec I$ and $\nu\delta\bar{\gamma}\nprec I$. \medskip 

{\bf (5)} As the final  part of the proof,  we will show that $a$ is a $1$-vertex. Suppose to the contrary that $a$ is a $2$-vertex, we will
consider two cases.
Denote by $\eta,\eta'$ the arrows $\eta:y\to a$ and $\eta':a\to y'$. \smallskip 

We may assume further that $\nu\delta\nprec I$. Indeed, if $\nu\delta\prec I$, then by Lemma \ref{lem:3.3}, we get 
that there is an arrow $\bar{\gamma}=\alpha^*:x\to j$, so $i^*=x$ and $\bar{i}=j$, and $Q$ has locally the following shape 
$$\xymatrix@R=0.6cm{&&i\ar[rd]^{\alpha}&& \\ 
b\ar[r]^{\delta^*}&x\ar[rr]^{\bar{\gamma}}\ar[ru]^{\gamma}&&j & }$$
The above subquiver gives rise to a wild subcategory, because we have no arrows $j\to b$, so both 
$\delta^*\gamma\alpha\nprec I$ and $\delta^*\bar{\gamma}\nprec I$, due to \ref{lem:3.6} and \ref{lem:3.3}. \smallskip

{\bf Case A.} Suppose that $\alpha^*\bar{\nu}\nprec I$. \smallskip 

Observe first that $\alpha^*\nu\prec I$, since otherwise $\Lambda$ admits a wild subcategory of type $\wt{\wt{\bD}}_4$ 
formed by $\alpha,\alpha^*,\nu,\bar{\nu}$ and $\eta$. Moreover, if $i^*$ is a $1$-vertex, then due to Lemma \ref{lem:3.3}, 
$\alpha^*\nu\prec I$ implies an arrow $a\to i^*$, so $i^*$ is a $1$-vertex in a triangle. But then, by Proposition 
\ref{prop:4.2}, we have at least one $2$-regular predecessor of $j$, which leads to a contradiction, since here 
$j$ has two $1$-regular predecessors: $i$ and $i^*$. Hence we may assume that $i^*$ is $2$-regular. If also $d$ is 
$2$-regular, then $\Lambda$ admits the following wild subcategory of type $\widetilde{\widetilde{\bE}}_6$: 
$$\xymatrix@R=0.6cm{&&& \circ && \\ 
&&&\ar[u] i^*\ar[d]^{\alpha^*}&& \\ 
\bar{i}&x\ar[l]_{\bar{\gamma}}\ar[r]^{\gamma}&i\ar[r]^{\alpha}&j\ar[r]^{\bar{\nu}}&d&\circ\ar[l]}$$

From now on, let $d$ be a $1$-regular vertex and $\zeta:d\to d'$ be the unique arrow starting at $d$. Then $d'$ is a 
$2$-vertex, by Lemma \ref{lem:3.2}, so we have another arrow $d''\to d'$. Moreover, we have $\bar{\nu}\zeta\nprec I$, 
since otherwise $d'=i^*$, and we get a triangle $(\alpha^* \ \bar{\nu} \ \zeta)$ satisfying $\alpha^*\bar{\nu}\nprec I$ 
but $\bar{\nu}\zeta\prec I$. But this gives a contradition with Lemma \ref{lem:3.4}, because we have no double arrows 
from $i^*$ to $j$. Note also that $\alpha\bar{\nu}\zeta\nprec I$. In fact, if $\alpha\bar{\nu}\zeta\prec I$, we would 
get an arrow $d'\to i$, by Lemma \ref{lem:3.6}, hence $d'=x$, and so $d=a$ or $b$, a contradiction. Now, if also 
$\alpha^*\bar{\nu}\zeta\nprec I$, then $\Lambda$ admits the following wild subcategory of type RiVIII 
$$\xymatrix@R=0.4cm{\circ &i^*\ar[l]\ar[rd]^{\alpha^*}&i\ar[d]^{\alpha}&&& \\ 
\circ\ar[r]&a&j\ar[l]_{\nu}\ar[r]^{\bar{\nu}}&d\ar[r]^{\zeta}&d'& d''\ar[l]}$$ 

Finally, let $\alpha^*\bar{\nu}\zeta\prec I$. In this case, the vertex $i^*$ is the target of two arrows 
$\omega:d'\to i^*$ and $\omega^*:a\to i^*$. Hence, we get a wild subcategory of type $\wt{\wt{\bD}}$: 
$$\xymatrix{&i&&& d\ar[d]^{\zeta} & & \\ 
b\ar[r]^{\delta^*} &x \ar[u]^{\gamma}&a\ar[l]_{\delta}\ar[r]^{\omega^*}&i^*&d'\ar[l]_{\omega}\ar[r]^{\phi} 
& e & \ar[l]_{\psi} e'}$$ 
if $e$ is $2$-regular. For $e$ being $1$-regular with $\psi:e\to e'$ a unique arrow starting at $e$, we get similar 
wild subcategory of type $\wt{\wt{\bD}}$, because both $\phi\psi\nprec I$ and $\zeta\phi\psi\nprec I$. Indeed $\phi\psi\prec I$, 
then we get a triangle $d'\to e\to e'=d'' \to d'$ with a $1$-vertex $e$. Hence, by Proposition \ref{prop:4.2}, we 
conclude that there is an arrow $e'\to d$, so $e'=j$, since $d$ is the unique $1$-regular predecessor of $d'$. But 
then $e=i$, so $d'=x$, and we obtain that $d=a$ or $b$, a contradiction. Similarily, if $\zeta\phi\psi\prec I$, then 
Lemma \ref{lem:3.6} yields an arrow $e'\to d$, so again $e'=j$, and we are done in this case. \medskip 

Using dual arguments, one can as above exclude case $\delta^*\bar{\gamma}\nprec I$. \medskip  

{\bf Case B}. We are now left with the case when both $\alpha^*\bar{\nu}\prec I$ and $\delta^*\bar{\gamma}\prec I$. 
Then it follows from Lemma \ref{lem:3.3} that there are arrows $\mu:d\to i^*$ and $\mu':\bar{i}\to b$. \smallskip 

Let first both $\nu\eta'\prec I$ and $\eta\delta\prec I$. Then $y'=i^*$ and $y=\bar{i}$, due to Lemma 
\ref{lem:3.3}. Moreover, if $\eta\eta'\nprec I$, then $\Lambda$ admits the following subcategory 
$$\xymatrix{ &&i&&&z& \\ 
\circ &b\ar[l]\ar[r]^{\delta^*}&x\ar[u]_{\gamma}&a\ar[l]_{\delta}\ar[r]^{\eta'}& 
i^*\ar[ru]^{\xi}&d\ar[l]_{\mu}\ar[r]&\circ}$$ 
where $\xi$ is the second arrow starting at $i^*$ (different from $\alpha^*$). Note that $\eta'\xi\nprec I$, 
because otherwise, we would get a triangle $(\eta' \ \xi \ \eta)$ with $\eta'\xi\prec I$, but $\eta\eta'\nprec I$, 
which contradicts properties described in Lemma \ref{lem:3.4}. Further, both $b,d$ must be $2$-regular, because 
otherwise by Proposition \ref{prop:4.2} (these vertices lie in triangles) we would obtain $j=\bar{i}$ or $i^*=x$, 
and then $b=a$ or $d$ or $d=a$ or $b$, which is impossible. If $\mu\xi\nprec I$, then this is already a wild 
subcategory of type $\wt{\wt{\bD}}$. For $\mu\xi\prec I$, the vertex $z$ is lying in a triangle, so it must be 
a $2$-vertex by Proposition \ref{prop:4.2}, because $i^*$ does not have a $1$-regular predecessor ($a,d$ are 
$2$-regular). Then the above subcategory can be extended using arrow $\circ\to z$ yielding a wild one-relation 
algebra RiXVIII. \smallskip 

Now, assume that $\eta\eta'\prec I$. Then, by \ref{lem:3.3}, we have an arrow $\xi:i^*\to\bar{i}$. As in case 
$\eta\eta'\nprec I$, we conclude from Proposition \ref{prop:4.2} that $b$ and $d$ are $2$-regular vertices. 
Moreover, also $\bar{i},i^*$ are $2$-vertices, because we have the two arrows $\bar{\gamma},\xi$ ending at 
$\bar{i}$ and the two arrows $\alpha^*,\xi$ starting at $i^*$ ($Q$ is regular). Let $\sigma:d'\to d$ be the 
second arrow $\sigma\neq \bar{\nu}$ ending at $d$ and $\beta:b'\to b$ the second arrow $\beta\neq\mu'$ ending 
at $b$. If $d'$ is a $2$-vertex, then we can find the following wild hereditary subcategory in covering: 
$$\xymatrix{&&&& i \ar[d]_{\alpha} &&& \\ 
b'\ar[r]^{\beta} & b & \ar[l]_{\mu'} \bar{i}=y \ar[r]^{\eta} & a & \ar[l]_{\nu} j \ar[r]^{\bar{\nu}} & d & 
\ar[l]_{\sigma} d' \ar[r] & \circ }$$ \smallskip 

Hence we may assume that $d'$ is $1$-regular. Denote by $\sigma':d''\to d'$ the unique arrow ending at $d'$. 
Then $\sigma'\sigma\prec I$, since otherwise, we could construct a wild hereditary subcategory of the same 
type as above (replacing the arrow $d'\to \circ $ by the arrow $\sigma':d''\to d'$). Consequently, $d'$ must 
be a $1$-vertex in a triangle $d''\to d' \to d \to d''$, hence it follows from Proposition \ref{prop:4.2} that 
$d$ admits a $1$-regular successor different from $d''$. But the second successor of $d$ is $t(\mu)=i^*$, which 
is $2$-regular, and we obtain a contradiction. \medskip 

Finally, it remains to consider the case when $\nu\eta'\nprec I$ or $\eta\delta\nprec I$. Since the arguments are 
dual, we will proceed only in case $\nu\eta'\nprec I$. As before, we can see that $b$ and $d$ are $2$-regular 
vertices. But then, using that also $\nu\delta\nprec I$, we obtain the following 
wild subcategory:  
$$\xymatrix{ &&& d&& \\ &&& j\ar[d]_{\nu}\ar[u]^{\bar{\nu}}&& \\ 
\circ & \ar[l] b\ar[r]^{\delta^*}& x & a\ar[l]_{\delta} \ar[r]^{\eta'} & y' & e \ar@{-}[l]^{\phi} }$$ 
Indeed, if $y'$ is a $2$-vertex, then we take an arrow $\phi:e\to y'\neq \eta'$. Now, it is sufficient to see 
that for $y'$ being a $1$-vertex, we can take $\phi$ as the unique arrow $:y'\to e$. To see this, we have to 
prove that $\eta'\phi\nprec I$ and $\nu\eta'\phi\nprec I$. \smallskip 

By Lemma \ref{lem:3.3}, $\eta'\phi\prec I$ gives a triangle $a\to y' \to e \to a$, where $y'$ is a $1$-vertex. Then 
$e=y$ must be a $2$-vertex, due to Lemma \ref{lem:3.5}. But now, applying Proposition \ref{prop:4.2} to this triangle, 
we conclude that $a$ must have at least one $1$-regular predecessor, which is impossible, since $a$ has only 
$2$-regular predecessors, namely: $e=y$ and $j$. 

Finally, if $\nu\eta'\phi\prec I$, then by Lemma \ref{lem:3.6}, we have an arrow $e\to j$, so $e=i^*$, because 
$e$ is cannot be a $1$-vertex. This means we have an arrow $y'\to i^*$, hence $i^*$ is a $2$-vertex in $Q$. 
Take the second arrow $\psi\neq \alpha^*$, $\psi:i^*\to z$, starting at $i^*$. Further, $\alpha^*\nu\nprec I$, 
because otherwise, it follows from Lemma \ref{lem:3.3} that there is an arrow $a=c\to i^*$, and we obtain 
$a=d$ or $a=y'$, which is a contradiction. When $z$ is a $1$-vertex with the unique arrow $\psi':z\to z'$ 
starting at $z$, then $z'$ is $2$-regular, due to Lemma \ref{lem:3.2}. Moreover, then $\psi\psi'\nprec I$, since 
for $\psi\psi'\prec I$, we would obtain a triangle with vertices $i^*,z,z'=d$ (by \ref{lem:3.3}) with $1$-regular 
vertex $z$, and then Proposition \ref{prop:4.2} implies that $i^*$ has a $1$-regular predecessor $y'$ being 
a target of arrow starting at $d$, and hence $a=d$, because $\eta':a\to y'$ is the unique arrow ending at 
$1$-regular vertex $y'$. Consequently, in case $z$ is $1$-regular, we get the following subcategory 
$$\xymatrix{&  & i \ar[d]_{\alpha} & i^* \ar[ld]_{\alpha^*} \ar[r]^{\psi} & z \ar[r]^{\psi'} & z' & z'' \ar[l]_{\xi} \\ 
y \ar[r]^{\eta} & a & \ar[l]_{\nu} j \ar[r]^{\bar{\nu}} & d & \ar[l]_{} d' && }$$ 
which is isomorphic to a wild one-relation algebra RiXIII. For $z$ being a $2$-vertex, we obtain a wild 
subcategory of the same type, but with arrows 
$$\xymatrix{z & \ar[l]_{\psi^*} z' \ar[r]^{\xi} & z''}$$ 
instead of $\xymatrix{z \ar[r]^{\psi'} & z' & z'' \ar[l]_{\xi}}$. Indeed, $z$ is $2$-regular, so we can take the 
second arrow $\psi^*:z'\to z$ ending at $z$, which is different from 
$\psi$. Then it is sufficient to see that $z'$ is $2$-regular, and we will conclude existence of the second arrow 
$\xi:z'\to z''$, $\xi\neq \psi^*$. So suppose to the contrary that $z'$ is $1$-regular with the unique arrow ending 
at $z'$ denoted by $\xi:z''\to z'$. In this case, we would obtain the same wild subcategory as above if $\xi\psi^*\nprec I$, 
so we can assume that $\xi\psi^*\prec I$. Therefore, by Lemma \ref{lem:3.3}, we conclude that there is an arrow 
$z\to z''$ and vertices $z'',z',z$ lie on a triangle with $1$-vertex $z'$. As a result, using Proposition \ref{prop:4.2} 
again, we infer that there is also an arrow $z''\to i^*$, and then $z''=d$ or $z''=y'$. Because Lemma \ref{lem:3.2} 
implies that $z''$ is $2$-regular, we can have only $z''=d$. Once again, by Proposition \ref{prop:4.2}, we conclude 
that $d$ must have at least one $1$-regular predecessor, which is impossible, since $d$ has only $2$-regular 
predecessors $j$ and $z\neq j$. 

\bigskip

\end{document}